\newtheorem{theorem}{Theorem}
\newtheorem{lemma}{Lemma}
\newtheorem{corollary}{Corollary}
\newtheorem{rem}{Remark}
\theoremstyle{definition}
\newtheorem{defi}{Definition}
\newcommand{\supp}{\operatorname{supp}}
\newcommand{\e}{\varepsilon}
\newcommand{\R}{\mathbb{R}}
\newcommand{\Z}{\mathbb{Z}}
\newcommand{\N}{\mathbb{N}}
\renewcommand{\d}{\mathrm{d}}
\newcommand{\ddta}{\frac{\d}{\d t}}
\newcommand{\ddt}[1]{\frac{\d #1}{\d t}}
\newcommand{\ddtt}[2]{\frac{\d^{#1}#2}{\d t^{#1}}}
\newcommand{\ddtta}[1]{\frac{\d^{#1}}{\d t^{#1}}}
\renewcommand{\L}{\mathcal{L}}
\newcommand{\A}{\mathcal{A}}
\newcommand{\id}{\mathrm{Id}}
\newcommand\wto{\rightharpoonup}
\newcommand\wsto{\stackrel{*}{\rightharpoonup}}
\date{\today}
\author[H. Olbermann] {Heiner Olbermann}
\date{\today}
\address[Heiner Olbermann]{Hausdorff Center for Mathematics, Bonn, Germany}
\email{heiner.olbermann@hcm.uni-bonn.de}
\title{The one-dimensional model for d-cones revisited}
\begin{document}
\maketitle
\begin{abstract}
A \emph{d-cone} is the shape one obtains when pushing an elastic sheet at its
center into a hollow cylinder. In a simple model, one can treat the 
elastic sheet in the deformed configuration as a developable surface with a singularity at the ``tip'' of the
cone.
In this approximation, the renormalized elastic energy is 
given by the bending energy density integrated over some annulus in the
reference configuration. The thus defined  variational problem depends on the
indentation $h$ of the
sheet into the cylinder.
This model has been
investigated before in the physics
literature; the main motivation
for the present paper is to give a rigorous version of some of the results
achieved there via formal arguments.
We derive the Gamma-limit of the energy functional as $h$ is sent to
0. Further, we analyze the minimizers of the limiting functional, and list a
number of necessary conditions that they have to fulfill. 
\end{abstract}
\section{Introduction}
Since the late '90s, there has been a lot of interest in the crumpling of thin
elastic sheets  in the physics community
\cite{PhysRevE.71.016612,PhysRevLett.80.2358,Cerda08032005,RevModPhys.79.643,CCMM,PhysRevLett.87.206105,PhysRevLett.78.1303,Lobkovsky01121995,MR2023444}. These
works mainly treat what may be thought of as  ``building blocks'' of the
more complex folding patterns one obtains when crushing an elastic sheet into a
container whose size is smaller than the diameter of the sheet. In other words,
these contributions analyze single ridges or single vertices, where the elastic energy
focuses. In the mathematics literature, ridges have been investigated in
\cite{MR2358334}, and vertices in \cite{MR3102597,MR3168627}. More precisely, these latter
works 
considered the so-called d-cone, the shape that one
obtains when pushing a thin elastic sheet at its center into a hollow
cylinder, which in the physics literature has been treated in \cite{PhysRevE.71.016612,PhysRevLett.80.2358,CCMM,Cerda08032005,RevModPhys.79.643}. 
This is the physical setup we will be interested in here.\\
\\
In \cite{MR3102597,MR3168627}, the d-cone has been modeled by the following
variational problem. Let $\gamma\in W^{2,2}(S^1,S^2)$ be a unit speed curve that
is not contained in a plane,
denote the unit ball in $\R^2$ by $B$, identify its boundary with $S^1$, and set
\[
\mathcal Y=\left\{y\in W^{2,2}(B;\R^3):y|_{\partial
    B}=\gamma,\,y(0)=0\right\}\,.
\]
The elastic energy of $y\in\mathcal Y$ is given by
$I_H(y)=\int_B|Dy^TDy-\id_{2\times 2}|^2+H^2|D^2y|^2\d x$, where $H$ is a
parameter that can be thought of as the thickness of the sheet. (This is a
typical model energy for thin elastic sheets; for a justification see e.g.~\cite{MR2358334}.) The result of
\cite{MR3102597,MR3168627} is that $\inf_{y\in \mathcal Y}I_H(y)$ is equal to
$C(\gamma)H^2|\log H|$
as $H\to 0$ in the leading order of $H$, with an explicit
constant $C(\gamma)$.\\
\\
In \cite{PhysRevLett.80.2358,Cerda08032005},  the d-cone has been modeled  as a developable surface
with a singularity at its tip. 
The connection to \cite{MR3102597,MR3168627} is that the shape of  the d-cone
here is entirely determined  by the boundary curve $\gamma$ from above. The
energy in the present model is (up to numerical constants) given by
$C(\gamma)$, and we look for configurations $\gamma$ with minimal energy. 
This model is one-dimensional, and can
be treated with ODE methods. In \cite{PhysRevLett.80.2358,Cerda08032005}, quantitative results are given for the
regime of ``small deflections'', in which  nonlinear terms are dropped. The
resulting equation is a one-dimensional obstacle problem with an additional
constraint. It is argued that solutions of this problem should consist of a
finite number of ``folds'', i.e., regions where the sheet lifts off the edge of
the cylinder. The elastic energy of such configurations is computed numerically,
and the numerical evidence clearly suggests that the solution consisting of a
single fold (without any ``sub-folds'') is the configuration of lowest
energy. Since the small deflection regime is independent of the indentation $h$,
the conclusion is that the shape of this minimizer is universal. This means in
particular that the angle subtended by the region where the sheet lifts off the
cylinder is independent of the indentation or any other parameter such as
elastic moduli of the sheet or the radius of the cylinder. The value of this
angle is roughly 140$^\circ$, in good agreement with experimental
observations.\\
\\
Here, we give a rigorous derivation of the small deflection regime in the sense
of $\Gamma$-convergence. Additionally, we reconsider the limiting functional and
give a list of properties that have to be satisfied by its minimizers. This
second part is quite similar to the analysis in \cite{PhysRevLett.80.2358}.
However, we carefully derive the necessary conditions for minimizers with
purely variational tools, and 
our results are slightly different, in that  the necessary
conditions we find  are not quite strong enough to exclude a
certain set of configurations that has been missed in
\cite{PhysRevLett.80.2358,Cerda08032005}.\\
\\
The plan of the present paper is as follows: In Section
\ref{sec:deriv-limit-probl}, we define our model and state the
$\Gamma$-convergence result. We also give the proofs of the ``compactness'' and
``lower bound'' parts of this statement, which are straightforward. In Section \ref{sec:constr-recov-sequ}, we
prove the ``upper bound'' part, which is somewhat more complicated. The main
difficulty is to make sure the various constraints are satisfied by the recovery
sequence. In Section \ref{sec:minim-limit-funct}, we state
and prove a number of necessary conditions for minimizers of the limiting
functional. The proof relies on a generalized Lagrange multiplier rule from
\cite{dmitruk1980lyusternik}, valid for variations in a convex cone.\\
\\
{\bf Notation.} Let $S^1=\R/ (2\pi\Z)$ and let $\iota:\R\to S^1$ be the quotient map. When we
write $(a,b)\subset S^1$  for $a,b\in\R$, it is understood that we are speaking of
the image of the interval under the quotient map $\iota$. The function space
$W^{k,p}(S^1)$ is given by
\[
\begin{split}
  \Big\{f:S^1\to\R:&\exists \tilde f\in W^{k,p}_{\mathrm{loc}}(\R), \tilde
    f(x)=\tilde f(x+2\pi)=f(\iota(x))\text{ for all }x\in\R\Big\}\,.
\end{split}
\]
The spaces $C^k(S^1)$ are defined analogously. Letting $I=[-\pi,\pi)$ and using the above identification of
$S^1=\iota(I)$ with $I$, we define the $W^{k,p}$ norm on $S^1$ by
\[
\|f\|_{W^{k,p}(S^1)}=\|f\|_{W^{k,p}(I)}\,.
\]
For the derivative of a function
$f\in W^{1,1}(S^1)$, we use both the notation $f'$ and $\ddt{f}$. For the use of the symbols $C$ and
$O$ in Section
\ref{sec:constr-recov-sequ}, see the explanations at the beginning of that section.\\
\\
{\bf Acknowledgments.} The author would like to thank Stefan M\"uller for
suggesting this problem.

\section{Derivation of the small deflection regime by $\Gamma$-convergence}
\label{sec:deriv-limit-probl}
The starting point is the variational problem given by the elastic energy
\[
\begin{array}{rrl}
E_{\text{bending}}:&W^{2,2}(S^1;\R^3)&\to \R\\
 & \gamma &\mapsto\begin{cases}\int_{0}^{2\pi}|\gamma''+\gamma|^2 \d t & \text{
     if } |\gamma|=|\gamma'|=1 \text{ a.e.}\\+\infty & \text{ else.}\end{cases}
\end{array}
\]
This is (up to a constant) the bending energy  $\int |\nabla^2 y|^2$ of an
elastic sheet $\Omega=B(0,1)\setminus B(0,\e)\subset\R^2$ under the deformation
$y:\Omega \to \R^3$, $y(r,t)=r \gamma(t)$ (the latter of course in
polar coordinates). The constraints $|\gamma|=|\gamma'|=1$ assure that $y$ is
an isometry away from the origin. The energy $E_{\text{bending}}$ is the leading
term in the energy scaling result for d-cones with boundary conditions given by
$\gamma$, see \cite{MR3102597,MR3168627}.\\
\\
Now we define the constrained functional
\newcommand{\E}{{\mathcal E}}
\[
\begin{array}{rrl}
\E_h:&W^{2,2}(S^1;\R^3)&\to \R\\
 & \gamma &\mapsto\begin{cases}E_{\text{bending}}(\gamma) & \text{ if }\gamma\cdot e_z\geq h\\\infty & \text{ else. }\end{cases}
\end{array}
\]
This models the d-cone being pushed into a cylinder of  height $h$ and radius $\sqrt{1-h^2}$.
The limit functional for $h\to 0$ will also be defined on the space $W^{2,2}(S^1;\R^3)$. There will be constraints for allowed configurations, and we define the space of admissible deformations:
\[
\mathcal A=\left\{(u,v,w)\in W^{1,\infty}(S^1;\R^2)\times W^{2,2}(S^1): w\geq 1, \, u=-w^2/2,\,u+v'=-w'^2/2\right\}\,.
\]
The constraints captured in the definition of $\A$ are  the remnants of the
constraints $\gamma_h\cdot e_z\geq h$, $|\gamma_h|=|\gamma_h'|=1$ for finite $h$.
We define the limit functional
\[
\begin{array}{rrl}
\E_0:&W^{1,\infty}(S^1;\R^2)\times W^{2,2}(S^1)&\to \R\\
 & (u,v,w) &\mapsto\begin{cases}\int_0^{2\pi}|w''+w|^2\d t & \text{ if }(u,v,w)\in\A\\\infty & \text{ else. }\end{cases}
\end{array}
\]
In the following, we write 
\begin{equation}
\begin{split}
  u_h(t)=&\gamma_h(t)\cdot e_r(t)-1\\
  v_h(t)=&\gamma_h(t)\cdot e_\varphi(t)\\
  w_h(t)=&\gamma_h(t)\cdot e_z
\end{split}\label{eq:20}
\end{equation}
for $t\in S^1$, where we have introduced the $t$-dependent orthonormal frame
\[
e_r(t)=(\cos t,\sin t,0),\quad e_\varphi(t)=(-\sin t,\cos t,0),\quad e_z=(0,0,1)\,.
\]
We will prove the following $\Gamma$-convergence result:
\begin{theorem}
\label{thm:gamma}
\emph{Compactness:} If $\gamma_h$ is a sequence in $W^{2,2}(S^1;\R^3)$ with
$\limsup_{h\to 0} h^{-2}\E^h(\gamma_h)<\infty$, then there exists a subsequence
(no relabeling)
and $(u,v,w)\in \A$ such that
\[
\begin{split}
h^{-1}w_h\wto& w \text{ in }W^{2,2}(S^1)\\
h^{-2}(u_h,v_h)\wsto & (u,v)\text{ in }W^{1,\infty}(S^1;\R^2) 
\end{split}
\]
\emph{Lower bound:} Let $\gamma_h$ be a sequence in $W^{2,2}(S^1;\R^3)$ such
that for $u_h,v_h,w_h$ defined as in \eqref{eq:20}, we have $h^{-1}w_h\wto w$ in
$W^{2,2}(S^1)$ and
$h^{-2}(u_h,v_h)\wsto  (u,v)$ in $W^{1,\infty}(S^1;\R^2)$. Then
\[
\liminf_{h\to 0}h^{-2}\E_h(\gamma_h)\geq \E^0(u,v,w)\,.
\]
\emph{Upper bound:} Let $(u,v,w)\in W^{1,\infty}(S^1;\R^2)\times
W^{2,2}(S^1)$. Then there exists a sequence $\gamma_h$ in $W^{2,2}(S^1;\R^3)$
such that 
$h^{-1}w_h\wto w$ in
$W^{2,2}(S^1)$ and
$h^{-2}(u_h,v_h)\wsto  (u,v)$ in $W^{1,\infty}(S^1;\R^2)$, and additionally
\[
\lim_{h\to 0}h^{-2}\E_h(\gamma_h)= \E^0(u,v,w)\,.
\]
\end{theorem}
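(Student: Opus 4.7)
My plan rests on two elementary computations done in the moving frame $e_r,e_\varphi,e_z$: first,
\[
|\gamma_h''+\gamma_h|^2 = (u_h''-2v_h')^2 + (2u_h'+v_h'')^2 + (w_h''+w_h)^2,
\]
which already exhibits the $z$-component as the target limit functional; second, the pointwise identities
\[
u_h=-\tfrac12(u_h^2+v_h^2+w_h^2), \qquad u_h+v_h'=-\tfrac12\bigl((u_h'-v_h)^2+(u_h+v_h')^2+(w_h')^2\bigr)
\]
coming from $|\gamma_h|=|\gamma_h'|=1$, which are the finite-$h$ incarnations of the two constraints defining $\A$.

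\emph{Compactness.} The energy bound gives $\|w_h''+w_h\|_{L^2}\le Ch$, so I would write $w_h=a_h\cos t+b_h\sin t+h\tilde w_h$ with $\tilde w_h$ bounded in $W^{2,2}$; Sobolev embedding together with the pointwise constraint $w_h\ge h$ then forces $\sqrt{a_h^2+b_h^2}=O(h)$, making $h^{-1}w_h$ bounded in $W^{2,2}$. For the in-plane components I switch to the sphere chart $\gamma_h=(\sqrt{1-w_h^2}\cos\psi_h,\sqrt{1-w_h^2}\sin\psi_h,w_h)$, which automatically gives $|\gamma_h|=1$ and turns $|\gamma_h'|=1$ into the first-order ODE $\psi_h'=\sqrt{1-w_h^2-w_h'^2}/(1-w_h^2)=1+\tfrac12 h^2(w^2-w'^2)+O(h^4)$. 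Using the rotation symmetry around $e_z$ to fix the integration constant of $\psi_h$, I get $\phi_h:=\psi_h-t=O(h^2)$ in $W^{1,\infty}$, and hence $u_h,v_h=O(h^2)$ in $W^{1,\infty}$. Passing to the limit in the two isometry identities above gives $u=-w^2/2$ and $u+v'=-w'^2/2$, placing $(u,v,w)$ in $\A$.

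\emph{Lower bound.} This step is soft: $h^{-1}(w_h''+w_h)\rightharpoonup w''+w$ in $L^2$, and weak lower semicontinuity of the $L^2$ norm together with the nonnegativity of the other two summands in the decomposition yields the conclusion.

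\emph{Upper bound.} Assume $(u,v,w)\in\A$ (otherwise $\E_0=\infty$ and any sequence works) and take $w_h:=hw$ together with $\gamma_h:=(\sqrt{1-w_h^2}\cos\psi_h,\sqrt{1-w_h^2}\sin\psi_h,w_h)$, $\psi_h$ solving the ODE above. Then $|\gamma_h|=|\gamma_h'|=1$ and $\gamma_h\cdot e_z=hw\ge h$ all hold exactly, and Taylor expansion gives $h^{-2}u_h\to -w^2/2=u$, $h^{-2}v_h\to v$ (choosing the integration constant in $\psi_h$ appropriately), while the cross terms factor as $u_h''-2v_h'\approx -h^2 w(w''+w)$ and $2u_h'+v_h''\approx -h^2 w'(w''+w)$ at leading order, so they contribute only $o(h^2)$ to the energy and only the $z$-component survives at the relevant scale. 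The main obstacle, flagged by the paper itself, is global: $\psi_h$ must close up on $S^1$, i.e.\ $\int_0^{2\pi}(\psi_h'-1)\,dt=0$, and this holds only to leading order — the necessary $\int(w^2-w'^2)\,dt=0$ is guaranteed by $(u,v,w)\in\A$ via integration of the identity $v'=-u-w'^2/2$ over $S^1$, but the $O(h^4)$ remainder has to be absorbed by a correction to $w_h$ of order $h^3$ whose energy cost is $o(h^2)$, and the integration constant in $\psi_h$ must in addition be chosen so that $v_h/h^2$ converges to $v$ itself and not to a rotated copy.
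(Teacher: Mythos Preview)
Your sphere-chart parametrization is a genuinely different route from the paper's, and for the upper bound it is considerably more economical. The paper proceeds through five successive modifications --- naive ansatz, vertical shift by $h^{5/2}$, radial projection onto $S^2$, arclength reparametrization, and finally a three-dimensional implicit function theorem on the unit tangent bundle of $S^2$ to match $(\gamma,\gamma')$ at $0$ and $2\pi$ --- together with a separate density lemma reducing to $(u,v,w)\in\A\cap W^{2,\infty}$. Your chart enforces $|\gamma_h|=|\gamma_h'|=1$ and $\gamma_h\cdot e_z\ge h$ automatically, so the only global obstruction is the single scalar condition $\int_0^{2\pi}\psi_h'\,\d t=2\pi$, and no $W^{2,\infty}$ approximation is needed since $W^{2,2}(S^1)\hookrightarrow C^1$ already makes the chart and the ODE for $\psi_h$ well-posed.

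Two steps need more than you give them. In the upper bound, the $O(h^4)$ closing defect cannot simply be ``absorbed'': you must exhibit a direction $\phi$ and solve $\int_0^{2\pi}\psi_h'(hw+\lambda\phi)\,\d t=2\pi$ for $\lambda=\lambda_h$. The linearization at $\lambda=0$ is, to leading order, $h\int_{S^1}(w''+w)\phi\,\d t$, so a one-dimensional implicit function theorem yields $\lambda_h=O(h^3)$ \emph{provided} you can pick $\phi$ with $\int(w''+w)\phi\neq 0$ and $\supp\phi\subset\{w>1\}$ (the latter to preserve the obstacle after perturbation). That such a $\phi$ exists is not automatic --- it would fail if $w''+w\equiv 0$ on $\{w>1\}$ --- and the paper devotes an explicit argument to ruling this out, hidden inside its $W^{2,\infty}$-approximation lemma. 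In compactness, you cannot ``use the rotation symmetry to fix the integration constant of $\psi_h$'': the sequence $\gamma_h$ is prescribed, $\psi_h$ is thereby determined pointwise, and there is no constant to choose. The paper instead differentiates the identities $|\gamma_h|^2=|\gamma_h'|^2=1$ twice and bounds $h^{-2}u_h''$ and $h^{-2}v_h''$ in $L^1$ via H\"older, after first asserting a $W^{2,2}$ bound on $h^{-1}(u_h,v_h,w_h)$. Your kernel-plus-obstacle argument for $h^{-1}w_h$ is clean and more explicit than the paper's appeal to coercivity; for the in-plane components you should either follow the paper's differentiation route or make the required normalization explicit rather than invoking a symmetry you are not free to exploit.
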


\begin{proof}[Proof of compactness and lower bound.]
Using the notation from \eqref{eq:20}, we have
\begin{equation}
\E_h(\gamma_h)=\int \left(2 v_h'-u_{h}''\right)^2+\left(2
  u_{h}'+v_{h}''\right)^2+\left(w_h''+w_h\right)^2\,.\label{eq:1}
\end{equation}
By  the coercivity of $\E_h$ in $W^{2,2}(S^1;\R^3)$ and $\limsup_{h\to 0} h^{-2}
\E^h(\gamma_h)<\infty$, 
\begin{equation}
h^{-1}(u_h,v_h,w_h) \text{ is bounded in }W^{2,2}(S^1;\R^3)\label{eq:22}\,,
\end{equation} 
and hence a subsequence converges to some $(U,V,w)\in
W^{2,2}(S^1;\R^3)$. By $h^{-1}w_h\geq 1$, we have $w\geq 1$. By the constraints $|\gamma_h|=1$, $|\gamma_h'|=1$, 
\begin{subequations}
  \begin{align}
    |\gamma_h|^2=(1+u_{h})^2+v_{h}^2+w_h^2=&1 \label{eq:21}\\
    |\gamma_h'|^2=(1+u_{h}+v_{h}')^2+(u_{h}'-v_{h})^2+w_h'^2=&1\,.
    \label{eq:14}
  \end{align}
\end{subequations}
Differentiating \eqref{eq:21} twice, we get 
\[
  -2u_{h}''=2u_{h}u_h''+u_h'^2+2v_{h}v_h''+v_h'^2+2w_{h}w_h''+w_h'^2\,.
\]
Multiplying this equality with $h^{-2}$, and using \eqref{eq:22} and H\"older's inequality
on the right hand side, we get boundedness of $h^{-2}u_h$ in $W^{2,1}(S^1)$ and
hence in $W^{1,\infty}(S_1)$. In the same way, by differentiating \eqref{eq:14}
twice, we get
\[
-2(u_h'+v_h'')= 2(u_h'+v_h'')(u_h+v_h')+2(u_h''-v_h')(u_h'-v_h)+2w_h''w_h'\,.
\]
Again multiplying by $h^{-2}$, using \eqref{eq:22} and the fact that $h^{-2}u_h$
is bounded in $W^{1,\infty}(S^1)$, we get 
boundedness of $h^{-2}v_h$ in $W^{1,\infty}(S_1)$. Choose a convergent
subsequence such that $h^{-2}(u_h,v_h)\wsto (u,v)$ in $W^{1,\infty}(S_1)$.\\
Multiplying  \eqref{eq:21} and \eqref{eq:14} by $h^{-2}$ and taking the limit
$h\to 0$  (say, the weak-* limit in $L^\infty$), we get
\[
\begin{split}
  -2u=&w^2\\
  -2(u+v')=&w'^2\,.
\end{split}
\]
We conclude that $(u,v,w)\in \A$.
This proves the compactness part. The lower bound follows immediately from 
formula \eqref{eq:1} for $\E_h$ and the weak lower semi-continuity of 
\[
w\mapsto\int_{S^1} (w''+w)^2\d t
\]
in $W^{2,2}(S_1)$. 
\end{proof}
\section{Construction of the recovery sequence in Theorem \ref{thm:gamma}}
\label{sec:constr-recov-sequ}
We will construct a recovery sequence $\gamma_h\in W^{2,2}(S^1,S^2)$ that meets
the constraints $|\gamma_h|=|\gamma'_h|=1$, $\gamma\cdot
e_z\geq h$ in several
steps. We start off with some sequence $\gamma_h^{(1)}:S^1\to\R^3$, and each step
$\gamma_h^{(i)}\to \gamma_h^{(i+1)}$ shall assure that one additional constraint
is met. At first, we will give this sequence of modifications for $(u,v,w)\in
\A\cap W^{2,\infty}(S^1;\R^3)$. 
The proof will be completed by an approximation
argument. \\
In Lemma \ref{lem:defo14} and Lemma \ref{lem:assver} below, $(u,v,w)\in\A\cap W^{2,\infty}(S^1;\R^3)$ will
be fixed, and we will use the following notational convention: \\
A statement such as ``$f\leq Cg$'' will be shorthand for the statement ``There
exists a constant $C>0$ that only depends on $\|u\|_{W^{2,\infty}}$,
$\|v\|_{W^{2,\infty}}$ and $\|w\|_{W^{2,\infty}}$, such that $f\leq Cg$.''
Similarly, if $f$ and $g$ depend on $h$, we will write $f=g+O(h^k)$ if there exists a constant $C$ that only 
depends on $\|u\|_{W^{2,\infty}}$,
$\|v\|_{W^{2,\infty}}$ and $\|w\|_{W^{2,\infty}}$, such that $|f-g|\leq C h^k$
for all $h$.
\begin{lemma}
\label{lem:defo14}
Let $(u,v,w)\in\A\cap W^{2,\infty}(S^1;\R^3)$. Then there exists a sequence of curves
$\gamma_h^{(4)}:\R\supset[0,2\pi]\to S^2$ with the following properties for $h$
small enough:
\begin{align}
\left|\ddt{\gamma^{(4)}_h}\right|=&1\,,\label{eq:16}\\
\gamma^{(4)}_h\cdot e_z\geq& h+\frac12 h^{5/2}\,,\label{eq:15}\\
|\gamma_h^{(4)}(2\pi)-\gamma_h^{(4)}(0)|\leq & Ch^4\,,\label{eq:gestim}\\
\left|\ddt{\gamma_h^{(4)}}(2\pi)-\ddt{\gamma_h^{(4)}}(0)\right|\leq & Ch^4\,.\label{eq:Dgestim}
\end{align}
\end{lemma}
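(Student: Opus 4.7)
The plan is to construct $\gamma_h^{(4)}$ through four successive stages, starting from the natural ansatz
\[
\gamma_h^{(1)}(t) := (1 + h^2 u(t)) e_r(t) + h^2 v(t) e_\varphi(t) + h w(t) e_z,
\]
with each stage enforcing one additional property.

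Stages 1--3 are largely computational. First I expand $|\gamma_h^{(1)}|^2$ and $|(\gamma_h^{(1)})'|^2$; the order-$h^2$ contributions are $2h^2(u + w^2/2)$ and $2h^2(u + v' + w'^2/2)$, both of which vanish identically by the algebraic constraints defining $\A$, so both quantities equal $1 + O(h^4)$. Moreover $\gamma_h^{(1)}\cdot e_z = hw \geq h$, and $\gamma_h^{(1)}$ is automatically $2\pi$-periodic (since $u,v,w$ and $e_r,e_\varphi$ are). Radial projection, $\gamma_h^{(2)} := \gamma_h^{(1)}/|\gamma_h^{(1)}|$, lands on $S^2$ and preserves the preceding estimates up to $(1 + O(h^4))$ factors. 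Arclength reparametrization of $\gamma_h^{(2)}$ on $[0, L_h]$ where $L_h := \int_0^{2\pi}|(\gamma_h^{(2)})'|\ud t = 2\pi + O(h^4)$, followed by truncation or extension by a short smooth arc to $[0, 2\pi]$, yields a unit-speed $\gamma_h^{(3)} : [0, 2\pi] \to S^2$; since $\gamma_h^{(2)}$ closes exactly by periodicity, both closing errors of $\gamma_h^{(3)}$ are dominated by $|L_h - 2\pi| = O(h^4)$.

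Stage 4 then has to upgrade the height bound from $\gamma_h^{(3)}\cdot e_z \geq h - O(h^4)$ to the strict $\geq h + \tfrac{1}{2} h^{5/2}$. My plan is to apply a small sphere self-map that shifts the $z$-coordinate upward by at least $\tfrac{1}{2} h^{5/2}$ exactly where needed, followed by a final arclength reparametrization. A natural choice is a geodesic perturbation of $\gamma_h^{(3)}$ within $S^2$ in the direction of the upward unit normal $(e_z - (\gamma\cdot e_z)\gamma - (\gamma'\cdot e_z)\gamma')/|\cdot|$, by an angle of order $h^{5/2}$ multiplied by a smooth cutoff supported on $\{w \leq 1 + h^{3/2}\}$.

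The hard part is precisely Stage 4: the lift must be uniformly of order $h^{5/2}$ in height wherever the height constraint is not already comfortably satisfied, yet must induce only $O(h^4)$ change in arclength, hence in the closing errors after the final reparametrization. A naive uniform geodesic lift in the normal direction costs $O(h^{7/2})$ in arclength (too large by a factor of $h^{-1/2}$); localizing the lift to the small set $\{w \leq 1 + h^{3/2}\}$, whose measure vanishes with $h$ because $w \in W^{2,\infty}$ and $w \geq 1$, shrinks the cost below the $O(h^4)$ budget. Carrying out this localization carefully, and verifying that the cutoff does not generate $W^{1,\infty}$ errors larger than the tolerance, is where the main technical work lies.
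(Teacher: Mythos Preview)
Your Stages 1--3 are essentially right and parallel the paper, but you have placed the height lift at the wrong end of the construction, and this is precisely why Stage 4 becomes hard. In the paper the lift is the \emph{first} modification: one simply sets $\gamma_h^{(2)}=\gamma_h^{(1)}+h^{5/2}e_z$ in $\R^3$, \emph{before} normalizing onto $S^2$ and before reparametrizing by arclength. Because $h^{5/2}e_z$ is a constant vector, it leaves $\ddt{\gamma_h^{(2)}}=\ddt{\gamma_h^{(1)}}$ unchanged, so $|\ddt{\gamma_h^{(2)}}|^2=1+O(h^4)$ exactly as before, and periodicity is preserved for free. The only cost shows up in the normalizing factor $|\gamma_h^{(2)}|$, which is still $1+o(1)$; after dividing by it the $z$-component is at least $h+\tfrac12 h^{5/2}$, and the arclength defect (hence the closing errors of $\gamma_h^{(4)}$) remains small. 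No cutoff, no intrinsic deformation of $S^2$, no localization.

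Your Stage 4, by contrast, has a genuine gap. You assert that the set $\{w\le 1+h^{3/2}\}$ has measure tending to $0$ ``because $w\in W^{2,\infty}$ and $w\ge 1$''. That inference is false: nothing in the definition of $\A$ forces the contact set $\{w=1\}$ to be null, and in fact every minimizer of the limit functional $\bar\E^0$ has $w\equiv 1$ on a nontrivial closed arc (this is the region where the sheet rests on the cylinder; see Theorem~\ref{thm:lincons}). For such $w$ your localization set has measure bounded below by a positive constant independent of $h$; on that set the geodesic curvature satisfies $\kappa\sim h(w''+w)\sim h$, so a geodesic lift of amplitude $h^{5/2}$ changes the arclength by an amount $\sim \int \theta\kappa\sim h^{7/2}$ that does \emph{not} fit into the $O(h^4)$ budget. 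The cutoff derivatives do not help here; the obstruction is the bulk term $\theta\kappa$, not the transition layer. The fix is to do what the paper does: translate by $h^{5/2}e_z$ first, then project, then reparametrize.
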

\begin{proof}
The initial ansatz is to define $\gamma_h^{(1)}:S^1\to\R^3$ by
\[
\gamma_h^{(1)}=(1+h^{2}u)e_r+h^{2}ve_\varphi+hwe_z\,.
\]
To make sure that the constraint $\gamma_h\cdot e_z\geq h$ holds even after the
modifications we are going to perform in the sequel, we set

\begin{equation}
\gamma_h^{(2)}=\gamma_h^{(1)}+h^{5/2}e_z\,.\label{eq:23}
\end{equation}
By a computation using $(u,v,w)\in \A$, we have
\begin{subequations}
\begin{align}
  |\gamma_h^{(2)}|^2=&1+h^4(u^2+v^2+2w)+h^5\label{eq:24}\\
  \left|\ddt{\gamma_h^{(2)}}\right|^2=&1+h^4\left((u+v')^2+(u'-v)^2\right)\,.\label{eq:25}
\end{align}
\end{subequations}
For future reference, we also make the following computations, 
\begin{equation}
\begin{split}
\left|\ddtt{2}{\gamma_h^{(2)}}\right|=&\left|(-1+h^2(u''-u-v'))e_r+h^2(u'+v''-v)e_\varphi+hw''e_z\right|\\
\leq& 1+Ch\\
  \ddta\left(|\gamma_h^{(2)}|^2\right)=&\ddta\left(1+h^2\underbrace{(2u+w^2)}_{=0}+h^4(u^2+v^2+2w)+h^5\right)\\
  =& 2h^4\left(uu'+vv'+w'\right)\\
\leq & Ch^4\\
    \frac{\d^2}{\d t^2}\left(|\gamma_h^{(2)}|^2\right)\leq &Ch^4
  \end{split}\label{eq:6}
\end{equation}
Our next modification assures the constraint $|\gamma_h|=1$. Namely, we define
$\gamma_h^{(3)}:S^1\to \R^3$ by
\[
\gamma_h^{(3)}=\gamma_h^{(2)}/|\gamma_h^{(2)}|\,.
\]
Note that $\gamma_h^{(3)}\cdot e_z\geq h+frac12 h^{5/2}$ by \eqref{eq:23} and
\eqref{eq:24} for $h$ small enough.
However, $\gamma_h^{(3)}$  does not fulfill the constraint $|\gamma_h'|=1$:
\begin{equation}
  \label{eq:10}
  \left|\ddt{\gamma_h^{(3)}}\right|=\frac{\ddta{\gamma_h^{(2)}}}{|\gamma_h^{(2)}|}-\frac12\frac{(\ddta|\gamma_h^{(2)}|^2)\gamma_h^{(2)}}{|\gamma_h^{(2)}|^3}=1+O(h^4)\,.
\end{equation}
Hence  we get for the length $ L_h$ of $\gamma_h^{(3)}$,
\begin{equation}
\begin{split}
  L_h=&\int_0^{2\pi}\left|\ddt{\gamma_h^{(3)}}\right|\d t\\
  =&2\pi+O(h^4)\,.
\end{split}\label{eq:26}
\end{equation}
The next step is a re-parametrization of $\gamma_h^{(3)}$, which will assure the
condition $|\gamma_h'|=1$, at the expense of the curve being closed. 
We
define $\tau_h:\R\supset[0,\infty)\to \R$ by
\[
  \tau_h(s)=\int_0^s\left|\ddt{\gamma_h^{(3)}}\right|^{-1}\d t\,.
\]
(Recall that by our notational convention, we do not distinguish on the right
hand side between $\gamma_h^{(3)}$ and $\gamma_h^{(3)}\circ\iota$, where $\iota:\R\to S^1$ is the canonical projection.)
Next we define $\gamma_h^{(4)}:\R\supset[0,\infty)\to S^2$ by
\[
  \gamma_h^{(4)}=\gamma_h^{(3)}\circ \tau_h\,.
\]
Note that $\gamma_h^{(4)}$ automatically satisfies
\eqref{eq:16}. Moreover, \eqref{eq:15} is satisfied since $\gamma^{(3)}_h$
fulfilled that property too. 
Further, by $\gamma_h^{(4)}(L_h)=\gamma_h^{(4)}(0)$, $\ddt{\gamma_h^{(4)}}(L_h)=\ddt{\gamma_h^{(4)}}(0)$ and \eqref{eq:26},
\begin{subequations}
\begin{align}
  |\gamma^{(4)}_h(2\pi)-\gamma^{(4)}_h(0)|\leq &C h^4 \sup\left|\ddt{\gamma^{(4)}_h}\right|\label{eq:13}\\
  \left|\ddt{\gamma^{(4)}_h}(2\pi)-\ddt{\gamma^{(4)}_h}(0)\right|\leq &C h^4
  \sup\left|\ddtt{2}{\gamma_h^{(4)}}\right|\,.\label{eq:28}
\end{align}
\end{subequations}
We estimate the suprema on the right hand sides,
\begin{subequations}
\begin{align}
 \sup\left|\ddt{\gamma^{(4)}_h}\right|\leq &(\sup|\tau_h'|)\sup\left|\ddt{\gamma_h^{(3)}}\right|\nonumber\\
  \leq&\left(\sup \left|\ddt{\gamma_h^{(3)}}\right|^{-1}\right)\sup\left|\ddt{\gamma_h^{(2)}}\right|\nonumber\\
  =&1+O(h^4)\label{eq:12}\\
\sup\left|\ddtt{2}{\gamma^{(4)}_h}\right|\leq&
\sup|\tau_h''|\sup\left|\ddt{\gamma^{(3)}_h}\right|+\sup|\tau_h'|\sup\left|\ddtt{2}{\gamma^{(3)}_h}\right|\,.\label{eq:27}
\end{align}
\end{subequations}
The  estimate \eqref{eq:12} in combination with \eqref{eq:13} proves \eqref{eq:gestim}, and it remains to prove
\eqref{eq:Dgestim}. We first compute $\ddtt{2}{\gamma_h^{(3)}}$, using
the notation $f=|\gamma_h^{(2)}|^{-1}$, 
\begin{equation}
   \ddtt{2}{\gamma_h^{(3)}}=
   \frac{\d^2}{\d t^2}(f\gamma_h^{(2)})=\left(\ddtt{2}f\right)\gamma_h^{(2)}+2\left(\ddt f\right)\left(\ddt{\gamma_h^{(2)}}\right)+f\ddtt{2}{\gamma_h^{(2)}}\,.
\label{eq:5}
\end{equation}
The derivatives of $f$ are estimated as follows,
\[
\begin{split}
  \ddt f=&-\frac12 |\gamma_h^{(2)}|^{-3}{\ddta|\gamma_h^{(2)}|^2}\\
  \leq& Ch^4\\
 \ddtt{2}f= & -\frac12 |\gamma_h^{(2)}|^{-3} \ddtta{2}|\gamma_h^{(2)}|^2
  +\frac34|\gamma_h^{(2)}|^{-5}\left({\ddta|\gamma_h^{(2)}|^2}\right)^2\\
  \leq & Ch^4\,,
\end{split}
\]
where we have used \eqref{eq:6}. 
Inserting into \eqref{eq:5}, and using again \eqref{eq:6}, we get
\begin{equation}
\left|\ddtt{2}{\gamma_h^{(3)}}\right|\leq 1+Ch\,.\label{eq:9}
\end{equation}
Next we compute $\tau_h''$,
\begin{equation}
\begin{split}
  |\tau_h''|=&\frac12\left|\ddt{\gamma_h^{(3)}}\right|^{-3}\left|\ddta\left|\ddt{\gamma_h^{(3)}}\right|^2\right|\\
  \leq & \left|\ddt{\gamma_h^{(3)}}\right|^{-2} \left|\ddtt{2}{\gamma_h^{(3)}}\right|\\
\leq & 1+Ch\,,
\end{split}\label{eq:11}
\end{equation}
where we have used \eqref{eq:10} and \eqref{eq:9}. 
Inserting \eqref{eq:11} into
\eqref{eq:27}, we get 
\[
\sup\left|\ddtt{2}{\gamma_h^{(4)}}\right|\leq C\,.
\]
Thus by \eqref{eq:28}, we have proved \eqref{eq:Dgestim}. Finally, we reduce the
domain of $\gamma_h^{(4)}$ from $[0,\infty)$ to $[0,2\pi]$. This completes the
proof of the lemma.
\end{proof}

The final step in the construction of the recovery sequence is gluing the ends
of the non-closed curve  $\gamma_h^{(4)}$ back together. 
For the sake of brevity, let us write $\gamma_h^{(4)}=\bar\gamma$. Further, we
introduce an orthonormal-frame-valued map $F$ by
$U=\bar\gamma$, $T=\bar\gamma'$, $N=T\wedge U$ and $F=(T,N,U)^T$. Finally, let $\kappa=N\cdot
\bar\gamma''$. Then $F$ (and in particular, $\bar \gamma$) is determined by initial conditions and the ODE
\begin{equation}
F'=\left(\begin{array}{ccc} 0&\kappa&-1\\-\kappa &0&0\\1& 0& 0\end{array}\right)F\,.\label{eq:2}
\end{equation}
We will have to modify $F$ such that $F(0)=F(2\pi)$.\\
\\
We introduce the following notation for modifications of curves:
\begin{defi}
For curves  $\gamma\in W^{2,2}([0,2\pi]; S^2)$ with $|\gamma'|=1$, and
$\psi\in C_0^\infty((0,2\pi))$, we define
$\kappa=\gamma''\cdot(\gamma'\wedge\gamma)$ and let  $F_\gamma[\psi]=(T,N,U)^T$  be the unique solution of the
initial value problem
\[\left\{
\begin{split}
  F_\gamma[\psi](0)=&\left(\ddt\gamma(0),\ddt\gamma(0)\wedge\gamma(0),\gamma(0)\right)^T\\
  \ddta F_\gamma[\psi](t)=&\left(\begin{array}{ccc} 0&\kappa(t)+\psi(t)&-1\\-(\kappa(t)+\psi(t)) &0&0\\1& 0&
      0\end{array}\right)F_\gamma[\psi](t)\quad\text{ for all }t\in[0,2\pi]\,.\label{eq:2}
\end{split}\right.
\]
Furthermore, we set
 $\gamma[\psi](t)=U(t)$ and $\ddt{\gamma}[\psi](t)=T(t)$ for all
 $t\in[0,2\pi]$. (Of course, this definition satisfies $\ddta\left(\gamma[\psi](t)\right)=\ddt{\gamma}[\psi](t)$.)
\end{defi}

Another tool in the modification process will be the following standard
implicit function theorem (see e.g.~\cite{MR787404}):
\begin{theorem}
\label{thm:implicit}
Let $x_0\in\R^n$, $f:B_r(x_0)\to \R^n$
continuously differentiable. Further let $\alpha,\beta,k\in \R^+$ such that 
\[
\begin{split} {\text a) }&\,Df(x_0) \text{ is invertible, }\quad
  |Df(x_0)^{-1}f(x_0)|\leq \alpha\,,\quad
  |Df(x_0)^{-1}|\leq \beta\\
  {\text b) }&\,|Df(x_1)-Df(x_2)|\leq k |x_1-x_2|\quad \text{for all }x_1,x_2\in B_r(x_0)\\
  {\text c) }&\,2\alpha\beta k<1\,\text{ and } 2\alpha<r\,.
\end{split}
\]
Then there exists a unique
solution $y\in B_r(x_0)$ to $f(y)=0$.
\end{theorem}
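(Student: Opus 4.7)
The plan is to read the statement as a Newton--Kantorovich style existence result and prove it by applying Banach's contraction mapping principle to the simplified Newton map
\[
T(x)=x-Df(x_0)^{-1}f(x),
\]
whose fixed points in $B_r(x_0)$ are exactly the zeros of $f$. The natural domain for $T$ is the closed ball $\overline{B_{2\alpha}(x_0)}$, which sits inside $B_r(x_0)$ by hypothesis (c).

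First, I would establish the contraction property. From the identity
\[
T(x)-T(y)=Df(x_0)^{-1}\bigl[Df(x_0)(x-y)-(f(x)-f(y))\bigr]
\]
together with the fundamental theorem of calculus and the Lipschitz hypothesis (b), one extracts
\[
|T(x)-T(y)|\le \beta k\,\max(|x-x_0|,|y-x_0|)\,|x-y|\le 2\alpha\beta k\,|x-y|,
\]
which is a strict contraction by (c).

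Second, I would verify that $T$ maps $\overline{B_{2\alpha}(x_0)}$ into itself. Specializing the identity above at $y=x_0$ and using that $Df(x_0)(x-x_0)-(f(x)-f(x_0))=-\int_0^1(Df(x_0+s(x-x_0))-Df(x_0))(x-x_0)\,ds$ upgrades the linear estimate to a quadratic one, $|T(x)-T(x_0)|\le \tfrac{1}{2}\beta k|x-x_0|^2$. Combining this with $|T(x_0)-x_0|=|Df(x_0)^{-1}f(x_0)|\le\alpha$ and condition (c) yields, for $|x-x_0|\le 2\alpha$,
\[
|T(x)-x_0|\le \tfrac{1}{2}\beta k(2\alpha)^2+\alpha=\alpha(1+2\alpha\beta k)<2\alpha.
\]
Banach's theorem then supplies a unique fixed point in $\overline{B_{2\alpha}(x_0)}$, giving both existence of a zero of $f$ and uniqueness inside this smaller ball.

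The only subtlety I expect is uniqueness in the larger ball $B_r(x_0)$ as formally stated. For any candidate zero $y\in B_r(x_0)$ the identity $y=T(y)$ together with the quadratic estimate forces the scalar inequality $|y-x_0|\le \tfrac{1}{2}\beta k|y-x_0|^2+\alpha$, and under the hypothesis $2\alpha\beta k<1$ its admissible set splits into a small branch $|y-x_0|\le 2\alpha/(1+\sqrt{1-2\alpha\beta k})<2\alpha$---which reduces to the uniqueness already proved---and a far branch separated from $0$ by $1/(\beta k)$, excluded by the quantitative choice of $r$ in the cited reference. Beyond organizing these standard Newton--Kantorovich estimates I do not anticipate any real obstacle, which is consistent with the theorem being invoked in the paper as an off-the-shelf tool.
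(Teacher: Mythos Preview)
The paper does not give its own proof of this theorem; it is quoted as a standard tool with a reference to Zeidler's text. Your sketch is the classical Newton--Kantorovich argument via the simplified Newton map, and it is correct for producing a unique zero in the closed ball $\overline{B_{2\alpha}(x_0)}$.

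Your caution about uniqueness in the full ball $B_r(x_0)$ is well placed: with only the hypotheses stated here (in particular, no upper bound on $r$ in terms of $\beta$ and $k$), uniqueness on all of $B_r(x_0)$ can indeed fail, as simple one-dimensional quadratic examples show. For the paper's application this is immaterial: only existence of a zero $a_h$ with $|a_h|\le 2\alpha = O(h^3)$ is actually used, and that is exactly what your contraction argument delivers.
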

For $t\in S^1$, we have $\bar\gamma(t)\in S^2$ and $\ddt{\bar\gamma}(t)\in S^2\cap
T_{\gamma(2\pi)}S^2\simeq S^1$, where $T_pS^2$ denotes the tangent space of
$S^2$ at $p$. I.e., the range of the map $t\mapsto
(\bar\gamma(t),\ddt{\bar\gamma}(t))$ is the bundle
\[
M=\{(x,T)\in S^2\times S^2: T\in T_xS^2\}\,.
\]
This is a 3-dimensional manifold.
In a small enough neighborhood $U$ of a point $(x,T)\in M$, a chart is given by
\[
\begin{split}
  \zeta:U\to &\R^3\\
  (\bar x,\bar T)\mapsto &\left(\bar x-x(x\cdot \bar x),(T\wedge x)\cdot \bar T\right)\,,
\end{split}
\]
Here, in the first two components, we made the identification
$\{y\in\R^3:y\cdot x=0\}\simeq \R^2$.
Below, we will choose the chart $\zeta$ defined as above with
$x=\bar\gamma(2\pi)$ and $T=\ddt{\bar\gamma}(2\pi)$.

Now, for $\bar \psi\in C_0^\infty((0,2\pi);\R^3)$, we set

\begin{equation}
 \label{fdef}
\begin{split}
  f_h:B(0,r)\subset \R^3\to &\R^3\\
  a\mapsto& \zeta\circ \left(\bar\gamma[a\cdot\bar\psi](2\pi),\ddt{\bar\gamma}[a\cdot\bar\psi](2\pi)\right)- \zeta\circ\left(\bar\gamma(0),\ddt{\bar\gamma}(0)\right)\,,
\end{split}
\end{equation}
 and we want to get existence of
$a\in B_r(0)$ such that $f_h(a)=0$.
The heart of the matter will be the application of Theorem
\ref{thm:implicit}. The upcoming lemma assures that its conditions are met
for  $\bar\gamma=\gamma_h^{(4)}$ as in the conclusion of Lemma \ref{lem:defo14}
for $h$ small enough.

\begin{lemma}
\label{lem:assver}
Let $u,v,w\in W^{2,\infty}(S^1)\cap\A$ and $\bar\gamma=\gamma_h^{(4)}$ as in the
conclusion of Lemma \ref{lem:defo14}.
 There exists $\bar \psi\in C^\infty_0((0,2\pi);\R^3)$ such that for every $h$ small enough,
 \begin{itemize}
  \item The derivative $Df_h$ of the function $f_h$  defined in \eqref{fdef} has full rank at $a=0$.
 \item There exist constants $\alpha,\beta>0$ that do not depend on $h$ such that
\begin{align}
   |Df_h(0)^{-1}|\leq& \beta h^{-1},\label{eq:3}\\
 |Df_h(0)^{-1}f_h(0)|\leq &\alpha h^3\label{eq:4}\,.
\end{align}
 \item There exist $r_0, k>0$ that do not depend on $h$ such that $f_h$ is $C^2$ on $B(0,r_0)$, and 
\begin{equation}
\label{eq:Lipfh}
\sup_{a\in B(0,r_0)}|D^2 f_h(a)|\leq k \,.
\end{equation}

 \end{itemize}

\end{lemma}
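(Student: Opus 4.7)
The plan is to verify the hypotheses of Theorem \ref{thm:implicit} for $f_h$ by linearizing the ODE \eqref{eq:2} in the parameter $a$; the smallness of $f_h(0)$ is already essentially encoded in Lemma \ref{lem:defo14}. Indeed, by construction $\zeta(\bar\gamma(2\pi),\ddt{\bar\gamma}(2\pi))=0$, so $f_h(0)=-\zeta(\bar\gamma(0),\ddt{\bar\gamma}(0))$; smoothness of $\zeta$ with bounded differential near the relevant point, combined with \eqref{eq:gestim} and \eqref{eq:Dgestim}, yields $|f_h(0)|\leq Ch^4$, so once \eqref{eq:3} is established, \eqref{eq:4} follows.

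To compute $Df_h(0)$ I would differentiate $F_{\bar\gamma}[a\cdot\bar\psi]$ at $a=0$. The variation $\delta F_j=\partial_{a_j}F_{\bar\gamma}[a\cdot\bar\psi]|_{a=0}$ satisfies the linear inhomogeneous system $(\delta F_j)'=A(\kappa)\,\delta F_j+(\partial_\kappa A)\,\bar\psi_j\,F$ with $\delta F_j(0)=0$, and the $j$-th column of $Df_h(0)$ is read off from $\delta F_j(2\pi)$ via $D\zeta$ at the endpoint. The crucial structural observation is that at leading order in $h$, $\bar\gamma$ degenerates to the great circle $e_r$ in $\{z=0\}$ with $\kappa\equiv 0$ and $N\equiv -e_z$, and a direct computation of the linearized system in this limit shows that both $\delta\bar\gamma(2\pi)$ and $\delta\ddt{\bar\gamma}(2\pi)$ point purely in the $e_z$-direction. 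Equivalently, the ``in-plane'' row of $Df_h(0)$---the row measuring the $e_y$-displacement in the chart $\zeta$ based at $\bar\gamma(2\pi)\approx e_x$, $\ddt{\bar\gamma}(2\pi)\approx e_y$---vanishes at $h=0$ and is generated only at order $h$ by the $O(h)$ tilt of $\bar\gamma$ (through $hw\,e_z$ in position, $hw'\,e_z$ in tangent, and the resulting $O(h)$ contribution to $\kappa$). The remaining two rows are non-degenerate at leading order, with entries essentially of the form $\int\sin s\,\bar\psi_j\,ds$ and $\int\cos s\,\bar\psi_j\,ds$ after solving the scalar equation $\delta U_3''+\delta U_3=-\bar\psi_j$.

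Accordingly, I would choose $\bar\psi=(\bar\psi_1,\bar\psi_2,\bar\psi_3)\in C_0^\infty((0,2\pi);\R^3)$ so that (i) $\bar\psi_1$ and $\bar\psi_2$ produce linearly independent leading-order out-of-plane endpoint displacement pairs, and (ii) $\bar\psi_3$ is selected so that the $O(h)$ coefficient in the in-plane row is non-zero; a generic triple of bump functions with differing supports and symmetries suffices. This yields $\det Df_h(0)=c\,h+o(h)$ with $c\neq 0$, whence \eqref{eq:3}. The bound \eqref{eq:Lipfh} follows from smooth dependence of ODE flows on the parameter $a$, together with the uniform-in-$h$ bound on $\bar\gamma$ in $W^{2,\infty}$ coming from $(u,v,w)\in W^{2,\infty}$ and Lemma \ref{lem:defo14}. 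The main obstacle is precisely the scaling claim in \eqref{eq:3}: one has to carefully track the $h$-expansions of $\bar\gamma$, $F$, $\kappa$ and the chart $\zeta$ (whose base point itself depends on $h$) in order to identify the correct leading-order vanishing of the in-plane row and to confirm that the $O(h)$ correction is genuinely nondegenerate for an appropriate choice of $\bar\psi_3$, rather than degenerating further.
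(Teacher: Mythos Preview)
Your approach is essentially the paper's: linearize the frame ODE via variation of constants, expand $\bar\gamma$ in $h$, identify that exactly one row of $Df_h(0)$ degenerates to order $h$ (the in-plane position row), and conclude $\det Df_h(0)\sim h$. The paper obtains the clean closed form
\[
\left.\frac{\d}{\d\e}\bar\gamma[\e\psi](2\pi)\right|_{\e=0}=\int_0^{2\pi}\bar\gamma(2\pi)\wedge\bar\gamma(s)\,\psi(s)\,\d s,
\qquad
\left.\frac{\d}{\d\e}\ddt{\bar\gamma}[\e\psi](2\pi)\right|_{\e=0}=\int_0^{2\pi}\ddt{\bar\gamma}(2\pi)\wedge\bar\gamma(s)\,\psi(s)\,\d s,
\]
which immediately reproduces your $\int\sin s\,\bar\psi_j$ and $\int\cos s\,\bar\psi_j$ rows at $h=0$ and makes the minor computation transparent; but this is the same linearization you describe, just packaged differently.

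Where you flag ``the main obstacle'' --- verifying that the $O(h)$ correction is genuinely nondegenerate for some $\bar\psi$ --- the paper does carry this out, and the answer is not ``generic $\bar\psi$ works'' but rather a structural condition on $w$. Expanding the determinant, the leading $O(h)$ coefficient is $\int\tilde b(s_1,s_2,s_3)\prod_i\bar\psi_i(s_i)\,\d s_i$ with
\[
\tilde b(s_1,s_2,s_3)=\det\begin{pmatrix}\cos s_1&\cos s_2&\cos s_3\\ \sin s_1&\sin s_2&\sin s_3\\ w(s_1)&w(s_2)&w(s_3)\end{pmatrix},
\]
so $\tilde b\equiv 0$ precisely when $w\in\mathrm{span}\{\cos,\sin\}$. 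This is excluded by $(u,v,w)\in\A$ (since $w\geq 1$), and \emph{that} is what guarantees a choice of $\bar\psi$ exists; without invoking the constraint you cannot close the argument. Once you make this explicit, your sketch becomes the paper's proof.
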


\begin{proof}

Let $\psi\in C_0^\infty(0,2\pi)$. 
In this proof, we will write $F_{\bar\gamma}=F$. We start with the computation of the 
 derivative of $\R\to W^{2,2}(S^1,S^2)$, $\e\mapsto
 F[\e\psi](2\pi)$.\\ 
Set 
\[
 E=\left(\begin{array}{ccc}0&1&0\\-1&0&0\\0&0&0\end{array}\right)\,.
\]

We have
\[
\begin{split}
 \frac{\d}{\d\e}F[\e\psi](t)|_{\e=0}
=&\frac{\d}{\d\e}\int_0^t \left(\begin{array}{ccc}
0 & \kappa+\e\psi & -1\\
-(\kappa+\e\psi) & 0 & 0\\
1 & 0 & 0
\end{array}\right)(s) F[\e\psi](s)\d s\\
=& \int_0^t \psi(s) E F(s)+\left(\begin{array}{ccc}
0 & \kappa & -1\\
-\kappa & 0 & 0\\
1 & 0 & 0
\end{array}\right)(s) \frac{\d}{\d\e}F[\e\psi](s)|_{\e=0}\d s\\
\end{split}
\]
By the variation of constants formula, we get
\[
\begin{split}
 \frac{\d}{\d\e}F[\e\psi](t)|_{\e=0}=& F(t)\int_0^t \psi(s)F^{-1}(s)EF(s)\d s\\
=& F(t)\int_0^t \psi(s)(-N,T,0)\left(\begin{array}{c}T^T\\N^T\\U^T\end{array}\right)\d s\\
=& F(t)\int_0^t \psi(s)
\left(\begin{array}{ccc}0 & -U_3 & U_2\\ U_3 & 0 & -U_1\\-U_2 & U_1 & 0\end{array}\right)\d s
\end{split}
\]
In particular, this yields
\begin{equation}
\begin{split}
 \frac{\d}{\d\e}\bar\gamma[\e\psi](2\pi)|_{\e=0}=& \int_0^{2\pi} \bar\gamma(2\pi)\wedge \bar\gamma(s) \psi(s)\d s\\
 \frac{\d}{\d\e}\left.\left(\ddt{\bar\gamma}[\e\psi](2\pi)\right)\right|_{\e=0}=& \int_0^{2\pi} \ddt{\bar\gamma}(2\pi)\wedge \bar\gamma(s) \psi(s)\d s
\end{split}\label{eq:36}
\end{equation}
Repeating these arguments, we can compute the second derivative of $(\e_1,\e_2)\mapsto\bar\gamma[\e_1\psi_1+\e_2\psi_2](2\pi)$,

\begin{equation*}
\begin{split}
 \frac{\d}{\d\e_2}\frac{\d}{\d\e_1}&\left(\bar\gamma [\e_1\psi_1+\e_2\psi_2](2\pi)\right)|_{\e_1,\e_2=0}\\
=&\int_0^{2\pi}\left(\int_0^{2\pi} \bar\gamma(2\pi)\wedge \bar\gamma(t)\psi_2(t)\d t\right)\wedge\bar\gamma(s)\psi_1(s)\d s\\
&+\int_0^{2\pi}\bar\gamma(2\pi)\wedge
\left(\int_0^s\d t \psi_2(t)\bar\gamma(s)\wedge \bar\gamma(t)\right)
\psi_1(s)\d s\\
=& \int_0^{2\pi}\int_0^{s}\psi_1(s)\psi_2(t)\bar\gamma(2\pi)\wedge
\underbrace{\left(\bar\gamma(t)\wedge\bar\gamma(s)+\bar\gamma(s)\wedge\bar\gamma(t)\right)}_{=0}\d
t\d s\\
&+\int_0^{2\pi}\left(\int_s^{2\pi} \bar\gamma(2\pi)\wedge \bar\gamma(t)\psi_2(t)\d t\right)\wedge\bar\gamma(s)\psi_1(s)\d s
\end{split}\label{eq:37}
\end{equation*}

Similarly, we get

\begin{equation*}
\begin{split}
 \frac{\d}{\d\e_2}\frac{\d}{\d\e_1}&\left.\left(\ddt{\bar\gamma} [\e_1\psi_1+\e_2\psi_2](2\pi)\right)\right|_{\e_1,\e_2=0}\\
=&\int_0^{2\pi}\left(\int_s^{2\pi} \ddt{\bar\gamma}(2\pi)\wedge \bar\gamma(\bar
  t)\psi_2(\bar t)\d \bar t\right)\wedge\bar\gamma(s)\psi_1(s)\d s
\end{split}\label{eq:38}
\end{equation*}

Now, for arbitrarily chosen $\bar\psi\in C^\infty_0((0,2\pi);\R^3)$ and
arbitrary $a\in \R^3$ let $\tilde \gamma:=\bar\gamma[a\cdot\bar \psi]$. We can
repeat the above computations to obtain

\begin{subequations}
\begin{align}
  \frac{\d}{\d\e_2}\frac{\d}{\d\e_1}&\left(\tilde\gamma
    [\e_1\psi_1+\e_2\psi_2](2\pi)\right)|_{\e_1,\e_2=0}\nonumber\\
  &=\int_0^{2\pi}\left(\int_s^{2\pi} \tilde\gamma(2\pi)\wedge \tilde\gamma(t)\psi_2(t)\d t\right)\wedge\tilde\gamma(s)\psi_1(s)\d s\label{eq:41}\\
  \frac{\d}{\d\e_2}\frac{\d}{\d\e_1}&\left.\left(\ddt{\tilde\gamma} [\e_1\psi_1+\e_2\psi_2](2\pi)\right)\right|_{\e_1,\e_2=0}\nonumber\\
  &=\int_0^{2\pi}\left(\int_s^{2\pi} \ddt{\tilde\gamma}(2\pi)\wedge \tilde\gamma(\bar
    t)\psi_2(\bar t)\d \bar t\right)\wedge\tilde\gamma(s)\psi_1(s)\d s \,.\label{eq:39}
\end{align}
\end{subequations}

It is easily seen from the definition of $f_h$ that
\[
\begin{split}
  \left|\partial_i\partial_j f_h(a)\right|^2\leq&
  \left(\frac{\d}{\d\e_2}\frac{\d}{\d\e_1}\left(\tilde\gamma [\e_1\bar
      \psi_i+\e_2\bar \psi_j](2\pi)\right)|_{\e_1,\e_2=0}\right)^2\\
 & +\left(\frac{\d}{\d\e_2}\frac{\d}{\d\e_1}\left.\left(\ddt{\tilde\gamma}
      [\e_1\bar\psi_i+\e_2\bar\psi_j](2\pi)\right)\right|_{\e_1,\e_2=0}\right)^2\,.
\end{split}
\]
Hence, \eqref{eq:Lipfh} follows from the observation that the right hand sides
in \eqref{eq:41} and \eqref{eq:39} are bounded by a constant $\bar k$ that only
depends on $\psi_1=\bar\psi_i$ and $\psi_2=\bar\psi_j$, for any choice of $\bar\psi$.\\
Next we want to compute the determinant of $Df_h(0)$. Denoting by 
$M_{ij}$ the 2 by 2 minor of $Df_h$ that is obtained by deleting the $i$th
row and the $j$th column, we have
\begin{equation}
\det Df_h= \sum_{i=1}^3(-1)^{i+1}M_{3i}\partial_i(f_h)_3 \,.\label{eq:42}
\end{equation}
We recall that the first two components of $f_h(a)$ are the projection of
$\bar\gamma[a\cdot\bar\psi](2\pi)-\bar\gamma(0)$ to
$T_{\bar\gamma(2\pi)}S^2\simeq \R^2$. 
Assume $\{i,j,k\}=\{1,2,3\}$.
By the remark we just made, the minor $M_{3i}$ is (up to a sign) the
determinant of the 2 by 2 matrix formed by the partial derivatives of
$\bar\gamma[a\cdot\bar\psi](2\pi)$ with respect to $a_j$ and $a_k$ in some
orthonormal basis of $T_{\bar\gamma(2\pi)}S^2$. In other words,
\[
M_{3i}=\epsilon_{jk}\bar\gamma(2\pi)\cdot
\left(\frac{\d}{\d\e}\bar\gamma[\e\bar\psi_j](2\pi)|_{\e=0}\right)\wedge
\left(\frac{\d}{\d\e}\bar\gamma[\e\bar \psi_k](2\pi)|_{\e=0}\right)\,.
\]
Here, $\epsilon_{jk}=1$ if $j<k$, and $\epsilon_{jk}=-1$ if $j>k$.
Using \eqref{eq:36}, we get
\begin{equation}
\begin{split}
M_{3i}= & \epsilon_{jk}\iint_0^{2\pi}\d s\d t \,\bar\psi_j(s)\bar\psi_k(t)\,
  \bar\gamma(2\pi)\cdot\left(\bar\gamma(2\pi)\wedge \bar\gamma(s)\right)\wedge
  \left(\bar\gamma(2\pi)\wedge \bar\gamma(t)\right)\\
  = & \epsilon_{jk}\iint_0^{2\pi}\d s\d t \,\bar\psi_j(s)\bar\psi_k(t)
  \left(\bar\gamma(2\pi)\cdot (\bar\gamma(s)\wedge \bar\gamma(t))\right)\\
  = & \epsilon_{jk}h\iint_0^{2\pi}\d s\d t \,\bar\psi_j(s)\bar\psi_k(t) b(s,t)+O(h^2)\,,
\end{split}
\label{eq:17}
\end{equation}
where
\[
b(s,t)=\left(\sin (t-s)
    w(2\pi)-\sin(t)w(s)+\sin(s)w(t)\right)\,.
\]
Next we compute the partial derivatives of the third component of $f_h$, 
\begin{equation}
\begin{split}
\left.\frac{\partial(f_h)_3}{\partial a_i}\right|_{a=0}=&   
\left(\ddt{\bar\gamma}(2\pi)\wedge \bar\gamma(2\pi)\right)\cdot
\frac{\d}{\d\e}\left.\left(\ddt{\bar\gamma}[\e\bar\psi_i](2\pi)\right)\right|_{\e=0} \\
=& \left(\ddt{\bar\gamma}(2\pi)\wedge
  \bar\gamma(2\pi)\right)\cdot \int_0^{2\pi} \d s \bar\psi_i(s)
  \ddt{\bar\gamma}(2\pi)\wedge\bar\gamma(s) \\
=& \int_0^{2\pi} \d s \bar\psi_i(s) \bar\gamma(s)\cdot
  \bar\gamma(2\pi) \\
=& \cos(s)+O(h)\,,
\end{split}\label{eq:18}
\end{equation}
where we have used \eqref{eq:36} in the second equation.
\[
\begin{split}
\det D f_h=&h\int \left(\prod_{i=1}^3 \bar\psi_i(s_i)\d
  s_i\right)\cos(s_1)b(s_2,s_3)-\cos(s_2)b(s_1,s_3))+\cos(s_3)b(s_1,s_2)+O(h^2)\\
= & h \int \left(\prod_{i=1}^3 \bar\psi_i(s_i)\d
  s_i\right)\tilde b(s_1,s_2,s_3)+O(h^2)\,,
\end{split}
\]
where 
\[
\tilde
b(s_1,s_2,s_3)=\left(\sin(s_2-s_1)w(s_3)+\sin(s_1-s_3)w(s_2)+\sin(s_3-s_2)w(s_1)\right)\,.
\]
The only solutions of $\bar b(s_1,s_2,s_3)=0$ for $w$ are $w(s)=A\cos (s)$,
$A\in \R$. Since this is not possible by $(u,v,w)\in\A$,
$\bar\psi$ can be chosen such that 
\begin{equation}
  \label{eq:19}
  \det Df_h\geq C h\,.
\end{equation}
 \\
Next, we estimate
$|Df_h^{-1}(0)|$ using the formula
\begin{equation}
Df_h^{-1}=\frac{1}{\det Df_h}\mathrm{cof}\,Df_h\label{eq:matrixinvert}\,,
\end{equation}
where 
\[
(\mathrm{cof}\,Df)_{ij}=(-1)^{i+j}M_{ij}
\]
is the cofactor matrix of $Df$. Since by \eqref{eq:36},
$|Df_h(0)|\leq C$, where $C$ is independent of $h$, the same holds true for
$\mathrm{cof}\,Df_h$. Hence, by \eqref{eq:19} and \eqref{eq:matrixinvert}, we have shown
\eqref{eq:3}. The property
\eqref{eq:4} follows from
\begin{equation}
\begin{split}
  \bar\gamma(2\pi)-\bar\gamma(0)=&O(h^4)\\
  \ddt{\bar\gamma}(2\pi)-\ddt{\bar\gamma}(0)=&O(h^4)\,,
\end{split}\label{eq:O4diff}
\end{equation}
which holds by Lemma \ref{lem:defo14}.
\end{proof}

The approximation of $(u,v,w)\in\A$ by $W^{2,\infty}$ functions is the content
of the following lemma.
\begin{lemma}
\label{lem:wapprox}
Let $w\in W^{2,2}(S^1)$ with $w\geq 1$, $\int_{S^1}w^2-w'^2\d t=0$. There exists
a sequence $w_\e\in W^{2,\infty}(S^1)$ such that
\[
\begin{split}
   w_\e\geq &1\\
  \int_{S^1} w_\e^2- w_\e'^2\d t=&0\\
  w_\e\to &w\quad\text{in } W^{2,2}(S^1)\text{ as }\e\to 0\,.
\end{split}
\]
\end{lemma}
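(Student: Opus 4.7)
The plan is to combine standard mollification with a small compactly supported perturbation that restores the integral constraint. Let $\rho_\e$ be a standard periodic mollifier and set $\tilde w_\e = w * \rho_\e$. Then $\tilde w_\e \in C^\infty(S^1) \subset W^{2,\infty}(S^1)$, and because $\rho_\e \geq 0$ with $\int \rho_\e = 1$ and $w \geq 1$, one has $\tilde w_\e \geq 1$; moreover $\tilde w_\e \to w$ in $W^{2,2}(S^1)$. In general the integral constraint fails for $\tilde w_\e$, but $A_\e := \int_{S^1}(\tilde w_\e^2 - \tilde w_\e'^2)\,\d t \to 0$. I would then seek $w_\e$ of the form $\tilde w_\e + \delta_\e \phi$ for a fixed test function $\phi \in C_c^\infty$ and a small parameter $\delta_\e$ determined by a one-variable equation.

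The delicate step is the choice of $\phi$. I would select $\phi \in C_c^\infty(S^1)$ with $\supp \phi \subset \{w > 1\}$ and $\int_{S^1}(w + w'')\phi\,\d t \neq 0$; the support condition will preserve the pointwise lower bound after perturbation, and the non-vanishing pairing will make the implicit function argument work. The existence of such $\phi$ is the main obstacle. Note that $\{w > 1\}$ is open (since $W^{2,2}(S^1) \hookrightarrow C^1$) and nonempty (otherwise $w \equiv 1$ contradicts $\int(w^2 - w'^2)=0$). If no admissible $\phi$ existed, then $w + w'' = 0$ as a distribution on $\{w > 1\}$, so by elliptic regularity $w \in C^\infty(\{w>1\})$ with $w(t) = A\cos t + B\sin t$ on each connected component. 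At any boundary endpoint $t_0$ of such an arc, the conditions $w(t_0)=1$ and $w'(t_0)=0$ (forced by $w \in C^1$ and the pointwise lower bound attained at $t_0$) pin down $w(t) = \cos(t-t_0)$, which is $\leq 1$ on the arc and contradicts $w > 1$; the case where the component equals all of $S^1$ is ruled out because $A\cos t + B\sin t \geq 1$ on $S^1$ is impossible.

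With $\phi$ in hand, define the quadratic $G_\e(\delta) := \int_{S^1}(\tilde w_\e + \delta \phi)^2 - (\tilde w_\e' + \delta \phi')^2\,\d t$. Then $G_\e(0) = A_\e \to 0$, and integration by parts (using $\phi \in C_c^\infty$, equivalently periodicity on $S^1$) gives $G_\e'(0) = 2\int_{S^1}(\tilde w_\e + \tilde w_\e'')\phi\,\d t \to 2\int_{S^1}(w+w'')\phi\,\d t \neq 0$. Hence for $\e$ small the equation $G_\e(\delta)=0$ admits a solution $\delta_\e = O(A_\e)\to 0$, either via the quadratic formula or via Theorem \ref{thm:implicit}.

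Setting $w_\e := \tilde w_\e + \delta_\e \phi$ then gives $w_\e \in W^{2,\infty}$, $\int_{S^1}(w_\e^2 - w_\e'^2)\,\d t = G_\e(\delta_\e) = 0$, and $w_\e \to w$ in $W^{2,2}$ since $\tilde w_\e \to w$ there and $\delta_\e \to 0$ with $\phi$ fixed. To verify $w_\e \geq 1$: outside $\supp\phi$ one has $w_\e = \tilde w_\e \geq 1$; on the compact set $\supp\phi \subset \{w > 1\}$, continuity of $w$ gives $w - 1 \geq c > 0$ uniformly, hence $\tilde w_\e - 1 \geq c/2$ for $\e$ small by uniform convergence, and this absorbs the perturbation $|\delta_\e\phi| \leq |\delta_\e|\,\|\phi\|_\infty$, which is smaller than $c/2$ for $\e$ small. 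The only nontrivial point of the argument is the rigidity step in the second paragraph excluding $w + w'' \equiv 0$ on $\{w > 1\}$.
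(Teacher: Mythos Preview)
Your argument is correct and follows essentially the same route as the paper's proof: mollify to get $\tilde w_\e\ge 1$ with $\tilde w_\e\to w$ in $W^{2,2}$, then correct the constraint $\int(w^2-w'^2)=0$ by adding $\delta_\e\phi$ with $\phi\in C_0^\infty(\{w>1\})$ chosen so that $\int(w+w'')\phi\neq 0$, and solve for $\delta_\e\to 0$. Your rigidity step ruling out $w''+w=0$ on $\{w>1\}$ is a minor variant of the paper's (you pin down $w(t)=\cos(t-t_0)$ from the initial data at a boundary point, whereas the paper argues $w'(t_0)\neq 0$ directly), and you explicitly dispose of the case where a component equals all of $S^1$; otherwise the two proofs coincide.
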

\begin{proof}
Let $\eta$ be a standard mollifier, i.e.,
$\eta\in C^\infty_0((-1,1))$,  $\eta\geq
0$,  $\eta(t)=0$ for $|t|>1$, $\int \eta(t)\d t=1$. 
Moreover let
$\eta_\e(\cdot)=\e^{-1}\eta(\cdot/\e)$ and $\bar w_\e=\eta_\e*w$. By $w\geq 1$, $\int\eta_\e=1$ and
$\eta_\e\geq 0$, we have $\bar w_\e\geq 1$. By standard properties of the
convolution with  mollifiers,
\[
\begin{split}
  \eta_\e * w\to & w \quad\text{ in }L^2(S^1)\text{ as }\e\to 0\,,\\
  \eta_\e * w'\to & w' \quad\text{ in }L^2(S^1)\text{ as }\e\to 0\,.
\end{split}
\]
In particular,
\begin{equation}
\int\bar w_\e^2-\bar w_\e'^2\d t\to 0\,.\label{eq:7}
\end{equation}
For $\psi\in C^\infty(S^1)$, let 
\[
\begin{split}
  G_\psi:\R\to& \R\\
  \lambda\mapsto &\int_{S^1}(w+\lambda\psi)^2-(w'+\lambda\psi')^2\,.
\end{split}
\]
The derivative of $G_\psi$ at 0 is given by
\[
\begin{split}
  DG_\psi(0)=&\frac{\partial}{\partial\lambda}\left.\int_{S^1}(w+\lambda\psi)^2-(w'+\lambda\psi')^2\right|_{\lambda=0}\\
  =& 2\int_{S^1}(w+w'')\psi\,.
\end{split}
\]
We claim that it is possible to choose $\psi$ with $\supp\psi\subset
U:=\{t:w(t)>1\}$ such that $DG_\psi(0)\neq 0$. To see this, note first that by the
continuity of $w$, $U$ is open. By $w\geq 1$ and
 $\int_{S^1}w^2-w'^2=0$, $U$ is non-empty. Assuming $\int_U(w''+w)\psi=0$ for all
 $\psi\in C^\infty_0(U)$, we  have
\begin{equation}
\|w''+w\|_{L^2(U)}=0\,.\label{eq:8}
\end{equation}
Let $U_0$ be a connected component of $U$. By \eqref{eq:8}, $w(t)=A\sin
(t+\alpha)$ for $t\in U_0$ for some $A\in \R,\alpha\in S^1$. Let
$t_0\in \partial U_0$. Then $1=w(t_0)=\lim_{t\to t_0}A \sin (t+\alpha)$, and
$t_0$ is not a local maximum of the latter function. By the embedding
$W^{2,2}(S^1)\subset C^{1}(S^1)$,  $w'$ is continuous. Hence
we have $w'(t_0)=\lim_{t\to t_0} A\cos (t+\alpha)\neq 0$. On the other hand,
again by the continuity of $w'$, we must have $w'(t_0)=0$ (since there is no $t$
in any neighborhood of $t_0$ with $w(t)<w(t_0)$). This contradiction
proves that it is possible to choose $\psi\in C_0^\infty(U)$ such that
$DG_\psi(0)=2\int_{S^1}(w+w'')\psi\neq 0$.\\
Choose such a $\psi$, and let $\delta_1$ be such that $w\geq 1+2\delta_1$ on
$\supp \psi$.
For $\e$ small enough,
we have $\supp \psi\subset \{t:\bar w_\e(t)\geq 1+\delta_1\} $.
Again by standard properties of approximation by mollifiers, we have
\[
\int_{S^1}(\bar w_\e''+\bar w_\e)\psi\to \int_{S^1}( w''+ w)\psi
\]
as $\e\to 0$. Hence there exists $\delta_2>0$ such that
\[
|DG^\e_\psi(0)|\geq \delta_2 \quad\text{uniformly in }\e\,,
\]
where $G^\e_\psi$ is defined by $\lambda\mapsto \int_{S^1}(\bar
w_\e+\lambda\psi)^2-(\bar w_\e'+\lambda\psi')^2$. \\
Now we are going to apply the implicit function theorem,
Theorem~\ref{thm:implicit}, with $f=G^\e_\psi$, $x_0=0$. Condition a) from that
theorem can be fulfilled with $\alpha$ arbitrarily small, if we choose $\e$
small enough. Condition b) is easily verified by direct computation,
\[
|DG_\psi^\e(\lambda_1)-DG_\psi^\e(\lambda_2)|=2|\lambda_1-\lambda_2|\left|\int_{S^1}(\psi''+\psi)\psi\right|\,.
\]
Finally, property c) holds since $\alpha$ can be chosen arbitrarily
small. Hence, we get the existence of $\lambda_\e$ such that
\[
G_\psi^\e(\lambda_\e)=0\,,
\]
with $\lambda_\e\to 0$ as $\e\to 0$.
Thus, again by choosing $\e$ small enough, we get
\[
w_\e:=\bar w_\e+\lambda_\e\psi\geq 1 \quad \text{ on }S^1\,.
\]
The sequence $w_\e$ fulfills all the required properties. This proves the lemma.
\end{proof}

\begin{corollary}
\label{cor:Aapprox}
Let $(u,v,w)\in \A$. Then there exists a sequence $(u_\e,v_\e,w_\e)\in\A\cap
W^{2,\infty}(S^1;\R^3)$ with $(u_\e,v_\e,w_\e)\to (u,v,w)$ in
$W^{2,2}(S^1;\R^3)$.
\end{corollary}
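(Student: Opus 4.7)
The plan is to reduce the approximation of triples in $\A$ to the approximation of $w$ alone, which has already been done in Lemma~\ref{lem:wapprox}. Given $(u,v,w)\in \A$, the constraints $u=-w^2/2$ and $u+v'=-w'^2/2$ force $u=-w^2/2$ and $v'=(w^2-w'^2)/2$, so the triple is determined by $w$ up to the additive constant in $v$. The compatibility condition for Lemma~\ref{lem:wapprox}, namely $\int_{S^1}(w^2-w'^2)\,dt=0$, holds automatically because $v\in W^{2,2}(S^1)$ is periodic, so $\int_{S^1} v'=0$.

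First I would apply Lemma~\ref{lem:wapprox} to produce $w_\e\in W^{2,\infty}(S^1)$ with $w_\e\geq 1$, $\int_{S^1}(w_\e^2-w_\e'^2)\,dt=0$, and $w_\e\to w$ in $W^{2,2}(S^1)$. Then I set
\[
u_\e:=-\tfrac12 w_\e^2,\qquad v_\e(t):=v(0)+\tfrac12\int_0^t\bigl(w_\e(s)^2-w_\e'(s)^2\bigr)\,ds,
\]
where the integral defines $v_\e$ as a periodic function precisely because of the vanishing mean condition supplied by Lemma~\ref{lem:wapprox}. By construction the constraints defining $\A$ hold, i.e.~$(u_\e,v_\e,w_\e)\in\A$.

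For the regularity, since $w_\e\in W^{2,\infty}(S^1)$, the functions $w_\e,w_\e',w_\e''$ are all in $L^\infty$, hence $u_\e''=-(w_\e')^2-w_\e w_\e''$ and $v_\e''=w_\e w_\e'-w_\e' w_\e''$ are in $L^\infty$, so $(u_\e,v_\e,w_\e)\in W^{2,\infty}(S^1;\R^3)$. For the convergence, I use the Sobolev embedding $W^{2,2}(S^1)\hookrightarrow C^1(S^1)$, which upgrades $w_\e\to w$ in $W^{2,2}$ to uniform convergence of $w_\e$ and $w_\e'$ together with $L^2$-convergence of $w_\e''$. Standard product-rule estimates then yield $u_\e\to u$ and, after the constant $v(0)$ has been fixed as above, $v_\e\to v$ in $W^{2,2}(S^1)$.

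There is no genuine obstacle here: the nontrivial construction (perturbing $w$ while preserving both $w\geq 1$ and the mean condition) is precisely what Lemma~\ref{lem:wapprox} supplies, and the remainder of the argument is bookkeeping of the algebraic constraints defining $\A$ together with routine continuity of polynomial expressions under $W^{2,2}$ convergence.
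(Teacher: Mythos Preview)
Your proof is correct and follows exactly the same approach as the paper: apply Lemma~\ref{lem:wapprox} to $w$, then define $u_\e=-w_\e^2/2$ and $v_\e(t)=v(0)+\tfrac12\int_0^t(w_\e^2-w_\e'^2)$, which is precisely the paper's construction. You supply more detail than the paper does (explicitly checking that the hypotheses of Lemma~\ref{lem:wapprox} hold, and verifying the $W^{2,\infty}$ regularity and $W^{2,2}$ convergence), but the argument is identical.
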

\begin{proof}
Let $w_\e$ be the approximation of $w$ from Lemma \ref{lem:wapprox}. We set
\[
\begin{split}
  u_\e:=&-w_\e^2/2\\
  v_\e(t):=&v(0)-\int_0^tu_\e(t)+\frac{w_\e'(t)^2}{2}\d t\,.
\end{split}
\]
This sequence has all required properties.
\end{proof}

\begin{proof}[Proof of Theorem \ref{thm:gamma}, upper bound]
By Corollary  \ref{cor:Aapprox} and a standard diagonal sequence argument, it
suffices to construct the recovery sequence for the case $(u,v,w)\in \A\cap
W^{2,\infty}(S^1;\R^3)$.\\
Let $\bar\gamma=\gamma_h^{(4)}$ be as in the conclusion of Lemma 
\ref{lem:defo14}. By Lemma \ref{lem:assver} and Theorem \ref{thm:implicit}, there exists $\bar\psi\in
C_0^\infty(0,2\pi)$ (independent of $h$) and $a_h\in B_{C h^3}(0)\subset \R^4$
such that $\bar\gamma[a_h\cdot\bar \psi](2\pi)=\bar\gamma[a_h\cdot\bar\psi](0)$
and $\ddt{\bar\gamma}[a_h\cdot\bar \psi](2\pi)=\ddt{\bar\gamma}[a_h\cdot\bar\psi](0)$.
Set $\gamma_h=\bar\gamma[a_h\cdot\bar \psi]$. By the boundary values  of
$\gamma_h,\gamma_h'$ at $0$ and $2\pi$, we may view $\gamma_h$ as a function in $W^{2,2}(S^1;\R^3)$. By $a_h=O(h^3)$ and
$\bar\gamma\cdot e_z\geq h+\frac12 h^{5/2}$, we have $\gamma_h\cdot e_z\geq h$ for $h$ small
enough. Thus, $\gamma_h$ fulfills the constraints $|\gamma_h|=|\gamma_h'|=1$,
and $\gamma_h\cdot e_z\geq h$. Finally, $\E_h(\gamma_h)\to \E^0(u,v,w)$ follows from the convergence
$h^{-1}(u_h,v_h,w_h)\to (0,0,w)$ and \eqref{eq:1}. Hence $\gamma_h$ is the desired recovery sequence.
\end{proof}

\section{Minimizers of the limit functional}
\label{sec:minim-limit-funct}
To analyze the minimizers of the limiting functional $\E^0$, we introduce
\[
\begin{split}
  \bar\E^0:W^{2,2}(S^1)\to&\R\\
  w\mapsto&\begin{cases}\int_0^{2\pi}(w''+w)^2\d t & \text{ if
    }\int_0^{2\pi}(w^2-w'^2)\d t=0\\
    +\infty & \text{ else.}
  \end{cases}
\end{split}
\]
It is easily seen that $\E^0(u,v,w)<\infty$ only if $\bar\E^0(w)<\infty$, and in
that case $u$ and $v$ are (up to a constant) uniquely determined by $w$. Thus
the study of minimizers of $\E^0$ reduces to the study of minimizers of
$\bar \E^0$.\\
\\
The existence of minimizers of $\bar\E^0$ follows in an obvious way by an
application of the direct method.
It is possible to compute rather explicitly the minimizers, provided one knows
that they are in $C^2(S^1)$. The proof of this fact is one the main points of
the following theorem. It will turn out that the following functions $(0,\pi]\to
\R\cup\{\pm\infty\}$ play an important role 
for the characterization of $w$:
\[
\begin{split}
  g_\alpha(z):=&-\frac{\alpha^2\sin(z)\cos(\alpha z)-\alpha\sin(\alpha
    z)\cos(z)}{\sin(z)\cos(\alpha z)-\alpha\sin(\alpha z)\cos(z)}\\
  \tilde g_\alpha(z):=&\frac{\alpha^2\sin(z)\cosh(\alpha z)-\alpha\sinh(\alpha
    z)\cos(z)}{\sin(z)\cosh(\alpha z)+\alpha\sinh(\alpha z)\cos(z)}\,.
\end{split}
\]
For  plots of $g_\alpha$, $\tilde g_\alpha$ for $\alpha=7$, see
Figures \ref{fig:1} and \ref{fig:2}.
\begin{figure}[h]
\includegraphics[height=5cm]{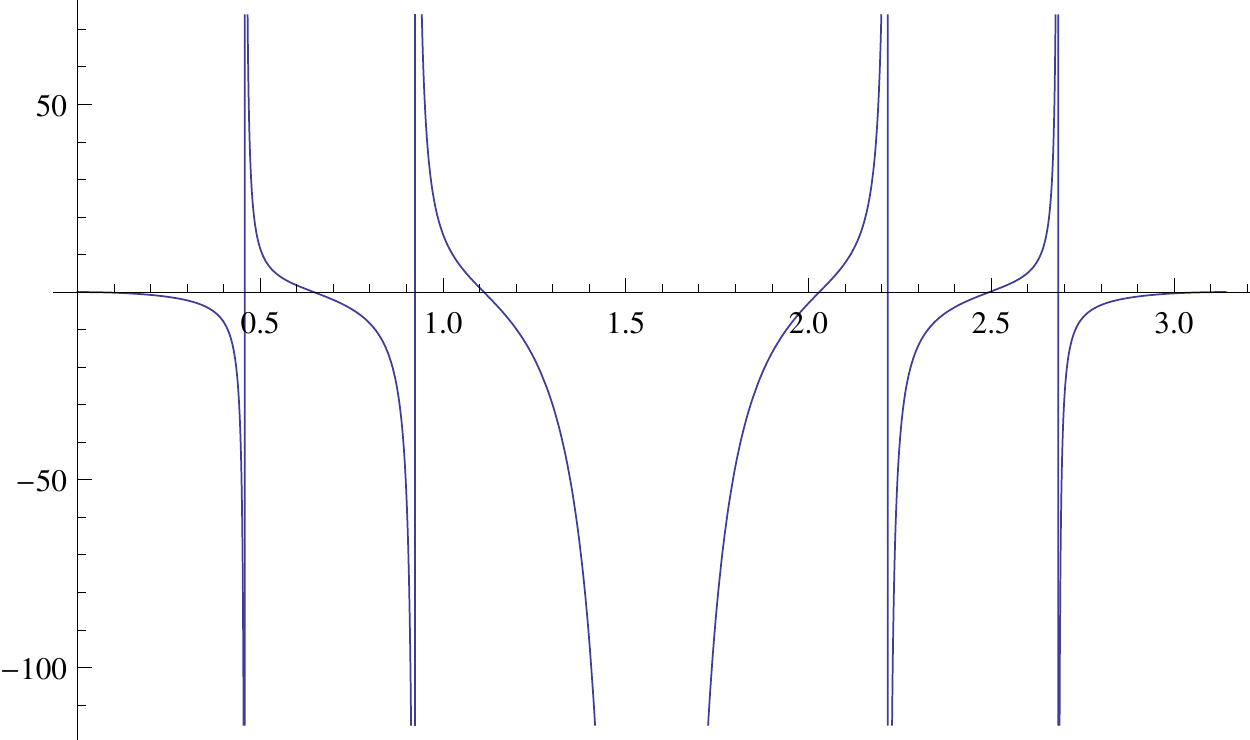}
\caption{Plot of $g_7$. }
\label{fig:1}
\end{figure}
\begin{figure}[h]
\includegraphics[height=5cm]{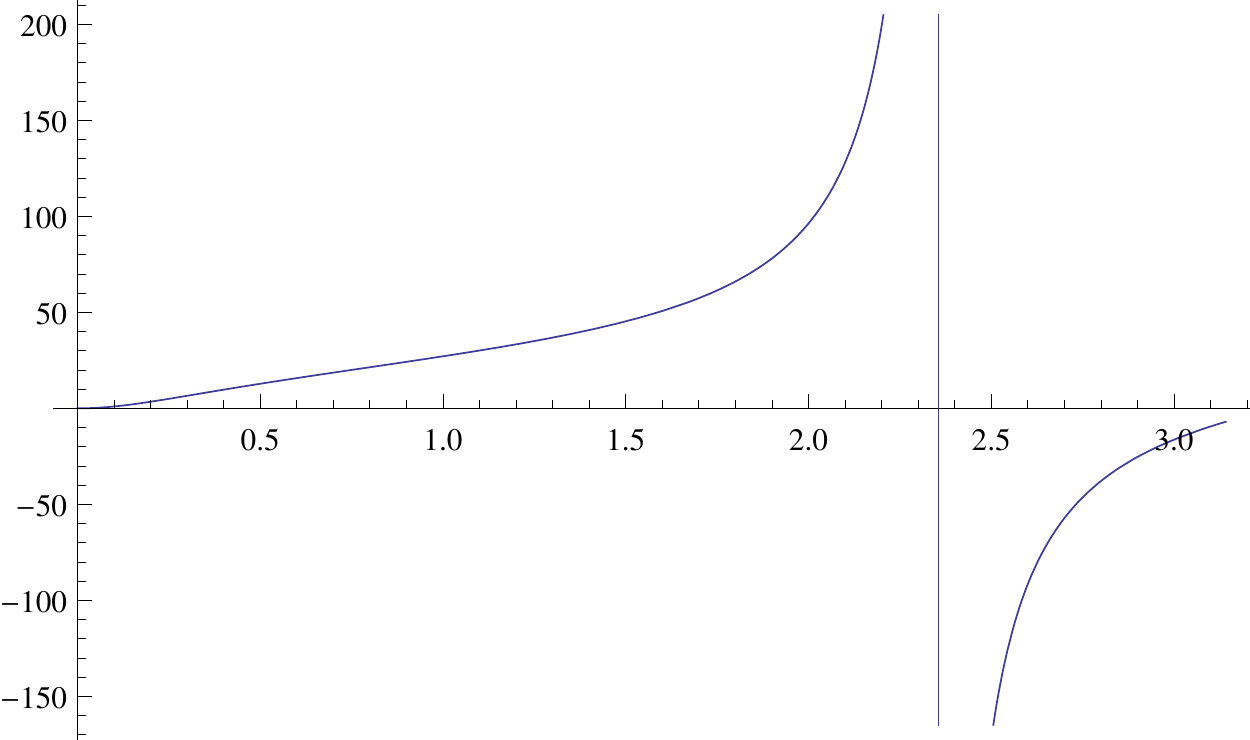}
\caption{Plot of $\tilde g_7$. }
\label{fig:2}
\end{figure}
\begin{rem}
\label{rem:gak}
If $\alpha\in(0,\infty)\setminus\{1\}$, then $g_\alpha^{-1}(k)$ is finite for every $k\in
[0,\infty)$. This follows easily from the fact that $g_\alpha$ is a quotient of
linearly independent
trigonometric polynomials. Analogously, if $\alpha>0$, then $\tilde g_\alpha^{-1}(k)$ is finite for every
$k\in [0,\infty)$. 
\end{rem}
\begin{theorem}
\label{thm:lincons}
Let $w$ be a local minimizer of the functional $\bar\E^0$.
Then there exist $\lambda\in\R\setminus\{0,-1\}$, $k\in [0,\infty)$ and finitely
many mutually disjoint open intervals $I_i\subset S^1$, $i=1,\dots, m$ such that
\begin{itemize}
\item[(i)] $w>1$ on $I:=\cup_iI_i$, $w=1$ on $S^1\setminus I$, and $w\in C^{2,1}(S^1)$.
\item[(ii)] $w''=k$ on $S^1\setminus I$
\item[(iii)] For $i=1,\dots, m$, writing $I_i=(t_i-z_i,t_i+z_i)$ and $\alpha=\sqrt{|1+\lambda|}$, we have:
\begin{itemize}
\item[(a)] If $\lambda\geq -1$, then $z_i\in g_\alpha^{-1}(k)$
and
\begin{equation}
\begin{split}
    w(t)=& \frac{\sin(z_i)\cos(\alpha (t-t_i))-\alpha\sin(\alpha
      z_i)\cos(t-t_i)}{\sin(z_i)\cos(\alpha z_i)-\alpha\sin(\alpha
      z_i)\cos(z_i)}\quad\text{ for }t\in I_i
  \end{split}
\end{equation}
\item[(b)] If $\lambda < -1$, then $z_i\in \tilde g_\alpha^{-1}(k)$ and
\begin{equation}
 \begin{split}
       w(t)=\frac{\sin(z_i)\cosh(\alpha (t-t_i))+\alpha\sinh(\alpha
      z_i)\cos(t-t_i)}{\sin(z_i)\cosh(\alpha z_i)+\alpha\sinh(\alpha
      z_i)\cos(z_i)}\quad\text{ for }t\in I_i
  \end{split}
\end{equation}
\end{itemize}
\end{itemize}


\end{theorem}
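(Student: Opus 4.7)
The plan is to apply the generalized Lagrange multiplier rule of Dmitruk \cite{dmitruk1980lyusternik} to the minimization of $\bar\E^0$ over the convex cone $K = \{w \in W^{2,2}(S^1) : w \geq 1\}$ subject to the scalar equality constraint $C(w) := \int_0^{2\pi}(w^2 - w'^2)\,\d t = 0$. The constraint qualification holds: $DC(w)\phi = 2\int(w+w'')\phi\,\d t$, and $w+w''\equiv 0$ would force $w$ to be a mean-zero combination of $\sin t,\cos t$, contradicting $w\geq 1$. The multiplier rule then produces $\lambda\in\R$ and a non-negative Radon measure $\mu$ on $S^1$, supported in the coincidence set $\{w=1\}$, such that in the sense of distributions
\begin{equation*}
(w''+w)'' + \lambda(w''+w) = \mu.
\end{equation*}

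Localizing by $\phi\in C_c^\infty(I_i)$ on a connected component $I_i$ of the open set $\{w>1\}$ gives the fourth-order linear ODE $w^{(4)} + (2+\lambda)w'' + (1+\lambda)w = 0$, whose characteristic polynomial factors as $(r^2+1)(r^2+1+\lambda)$. Setting $\alpha := \sqrt{|1+\lambda|}$, the solution basis is $\{\cos t, \sin t, \cos(\alpha t), \sin(\alpha t)\}$ when $\lambda>-1$ and $\{\cos t, \sin t, \cosh(\alpha t), \sinh(\alpha t)\}$ when $\lambda<-1$. The degenerate values $\lambda\in\{0,-1\}$ are ruled out separately: $\lambda=0$ would require the unconstrained Euler--Lagrange equation to be compatible with $w\geq 1$, forcing $w\equiv 1$ and hence violating $C(w)=2\pi\neq 0$; $\lambda=-1$ reduces the ODE on $I_i$ to $w^{(4)}+w''=0$, whose solutions with $w=1$, $w'=0$ at both endpoints of a nontrivial $I_i$ force $w\equiv 1$, again a contradiction.

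For the regularity claim $w\in C^{2,1}(S^1)$, set $v:=w''+w$, which satisfies $v''+\lambda v=\mu$ with $\mu$ a finite Radon measure; this already yields $v\in C^0$ and hence $w''\in C^0$. Since $\mu$ is supported in $\{w=1\}$ and equals $\lambda$ (a bounded density) on the interior of that set, the only threat to $C^{2,1}$ comes from atoms of $\mu$ at the endpoints of the $I_i$'s, which would produce jumps of $v'$ and hence of $w'''$ and would moreover make the boundary value $k$ in (ii) ambiguous. I rule such atoms out by testing the variational inequality with variations $\phi$ vanishing to sufficient order at a prospective atom $t_*$, so that both $\pm\phi$ are admissible and the would-be contribution $\mu(\{t_*\})\phi(t_*)$ forces $\mu(\{t_*\})=0$. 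Then $\mu\in L^\infty$, so $w^{(4)}\in L^\infty$ and $w\in C^{3,1}\subset C^{2,1}$; the now unambiguous boundary value $k:=w''(t_i\pm z_i)$ is the common constant in (ii), and $k\geq 0$ follows from Taylor-expanding $w$ at an endpoint using $w(t_*)=1$, $w'(t_*)=0$ and $w\geq 1$ on both sides.

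Finally, for the explicit form on $I_i=(t_i-z_i,t_i+z_i)$, a symmetry argument shows $w$ is even about $t_i$: reflecting $s\mapsto -s$ preserves the ODE and the symmetric boundary data $w=1$, $w'=0$, $w''=k$ at $\pm z_i$, while imposing three conditions at $z_i$ on the two-dimensional odd subspace (spanned by $\sin t,\sin(\alpha t)$ or the hyperbolic analogue) forces the odd part to vanish (using $\alpha\neq 0,1$, guaranteed by $\lambda\notin\{0,-1\}$). Restricting to the two-dimensional even subspace $\mathrm{span}\{\cos(\alpha(t-t_i)),\cos(t-t_i)\}$ (resp.\ the hyperbolic analogue), the conditions $w(t_i+z_i)=1$ and $w'(t_i+z_i)=0$ uniquely determine the coefficients and produce the displayed formulas; the remaining condition $w''(t_i+z_i)=k$ becomes exactly $g_\alpha(z_i)=k$ (resp.\ $\tilde g_\alpha(z_i)=k$). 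Finiteness of the family $\{I_i\}$ then follows from Remark \ref{rem:gak}: the preimage $g_\alpha^{-1}(k)$ (resp.\ $\tilde g_\alpha^{-1}(k)$) is a finite set bounded away from $0$, while $\sum_i 2z_i \leq 2\pi$. The main obstacle is the regularity step: without the atom-free conclusion for $\mu$, neither $C^{2,1}$ regularity nor a single well-defined $k$ would survive, and the rest of the argument would collapse.
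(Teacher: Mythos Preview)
Your overall strategy matches the paper's: apply the Dmitruk multiplier rule, obtain a nonnegative measure $\mu$ from the variational inequality, deduce the linear ODE on $\{w>1\}$, solve it explicitly on each component with the boundary data $w=1$, $w'=0$, and use finiteness of $g_\alpha^{-1}(k)$ to bound the number of components. However, several steps are either incorrect or missing.

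\textbf{Regularity.} Your treatment of $C^{2,1}$ is both unnecessary and flawed. Once $\mu$ is a finite Radon measure, a single antiderivative of $\mu$ lies in $BV(S^1)\subset L^\infty(S^1)$; combined with $w\in W^{2,2}$ this gives $w'''\in L^\infty$ and hence $w\in C^{2,1}$ directly, \emph{regardless of atoms}. Atoms of $\mu$ produce jumps of $w'''$, not of $w''$. Your proposed atom-exclusion argument is circular: if $\phi$ vanishes at $t_*$ then $\mu(\{t_*\})\phi(t_*)=0$ automatically, so testing with such $\phi$ yields no information about $\mu(\{t_*\})$. (Also, the equation for $v=w''+w$ should read $v''+(1+\lambda)v=\mu$, not $v''+\lambda v=\mu$; and on the interior of $\{w=1\}$ one gets $\mu=1+\lambda$, not $\lambda$.)

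\textbf{Missing steps.} Two substantive points are absent. First, you never show that the coincidence set $\{w=1\}$ is nonempty; without this the intervals $I_i$ have no endpoints and the boundary value problem makes no sense. The paper handles this by analyzing the periodic ODE on all of $S^1$ and showing it has no admissible solution. Second, your claim that the boundary value $k:=w''(t_i\pm z_i)$ is ``the common constant in (ii)'' is not justified: you have not shown that the values $k_i$ agree across different components, nor that $w''$ equals this value throughout $S^1\setminus I$ (which may a priori contain complicated Cantor-like pieces before finiteness is proved). The paper closes this gap by building a global Lipschitz function equal to $k_i$ on $I_i$ and to $w''$ elsewhere, and showing its derivative vanishes a.e.

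Finally, note that $K=\{w\ge 1\}$ is not a cone; the cone in the Dmitruk theorem is the tangent cone at the minimizer, namely $\{\varphi\in W^{2,2}:\varphi\ge 0 \text{ on }\{w=1\}\}$.
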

The proof of the theorem relies on (a special case of) Theorem 3.1 from \cite{dmitruk1980lyusternik}, which we cite now.
\begin{theorem}
Let $X$ be a normed space, 
$K\subset X$ a closed convex cone with $0\in K$ such that $K-K=X$, $f,g:X\to \R$
Fr\'echet differentiable, $u_0\in X$ 
a local minimum of the variational problem 
\[\left\{
\begin{split}
  f(u)\rightarrow&\min \\
g(u)=& \,0\\
u\in& \,x_0+K\,.
\end{split}\right.
\]
Then the following Lagrange multiplier rule holds: there exists $\lambda\in\R$
such that for every $ \varphi\in K$,
\[
Df(u_0)\varphi+\lambda Dg(u_0)\varphi\geq 0\,.
\]
\label{thm:convar}
\end{theorem}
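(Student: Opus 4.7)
The plan is to prove this cone-constrained Lagrange multiplier rule in two stages: first, a Lyusternik-type correction theorem adapted to the convex cone $K$ converts minimality into a first-order inequality on $K\cap\ker Dg(u_0)$; second, a two-dimensional convexity argument extracts the multiplier $\lambda$. Throughout I write $A=Df(u_0)$ and $B=Dg(u_0)$; the goal is to exhibit $\lambda\in\R$ with $A\varphi+\lambda B\varphi\geq 0$ for every $\varphi\in K$.

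The central claim I would establish is that $A\varphi\geq 0$ whenever $\varphi\in K$ satisfies $B\varphi=0$. The strategy is to show that such $\varphi$ is a first-order feasible direction: for every small $t>0$, produce a remainder $r_t\in X$ with $\|r_t\|=o(t)$ such that $t\varphi+r_t\in K$ and $g(u_0+t\varphi+r_t)=0$. Granted such $r_t$, the point $u_t:=u_0+t\varphi+r_t$ satisfies $u_t-x_0=(u_0-x_0)+(t\varphi+r_t)\in K$ (both summands lie in the convex cone $K$), as well as $g(u_t)=0$ and $u_t\to u_0$; local minimality then gives $f(u_t)\geq f(u_0)$ for small $t$, and the Fr\'echet expansion $f(u_t)-f(u_0)=tA\varphi+o(t)$ yields $A\varphi\geq 0$ after dividing by $t$ and sending $t\to 0$.

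Constructing $r_t$ is the main obstacle and the essential content of \cite{dmitruk1980lyusternik}. The classical Lyusternik theorem solves $g(u_0+t\varphi+r)=0$ by a contraction/Newton iteration based on surjectivity of $B$; here the extra requirement $t\varphi+r_t\in K$ forces each Newton step to be carried out \emph{inside} $K$, exploiting the convexity and closedness of $K$ together with a regularity condition such as $B(K)=\R$. The hypothesis $K-K=X$ guarantees that $K$ is thick enough for such an iteration to be set up, and rules out the purely abnormal situation in which the multiplier $\lambda$ would have to be interpreted as $\lambda_0=0$.

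Granted the central claim, the multiplier follows from a finite-dimensional convexity argument applied to the convex cone $C=\{(A\varphi,B\varphi):\varphi\in K\}\subset\R^2$. In the generic case where $B$ takes both signs on $K$, suppose for contradiction that
\[
\sup_{\varphi\in K,\,B\varphi>0}\frac{-A\varphi}{B\varphi}\;>\;\inf_{\varphi\in K,\,B\varphi<0}\frac{-A\varphi}{B\varphi}.
\]
Choose $\varphi_+,\varphi_-\in K$ witnessing this strict inequality. The combination $\psi=|B\varphi_-|\varphi_++B\varphi_+\varphi_-$ lies in $K$ (it is a nonnegative combination of elements of the convex cone), satisfies $B\psi=0$, and a short computation using the witnessing inequality yields $A\psi<0$, contradicting the central claim. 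Thus any $\lambda$ between the two bounds satisfies $A\varphi+\lambda B\varphi\geq 0$ on all of $K$. The degenerate sub-cases ($B$ of a single sign on $K$, or $B\equiv 0$ on $K$) are handled by taking $\lambda$ with an appropriate sign or extremal value, the hypothesis $K-K=X$ ensuring that these degeneracies reduce to the normal formulation of the theorem.
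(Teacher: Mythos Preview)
The paper does \emph{not} prove this theorem: it is quoted as (a special case of) Theorem~3.1 from \cite{dmitruk1980lyusternik} and used as a black box in the proof of Theorem~\ref{thm:lincons}. There is therefore no ``paper's own proof'' to compare against.

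Your outline is consistent with the strategy of that reference: a Lyusternik-type correction theorem adapted to the cone yields the first-order necessary condition $Df(u_0)\varphi\geq 0$ on $K\cap\ker Dg(u_0)$, and then a two-dimensional separation argument produces the multiplier. The extraction of $\lambda$ in your second stage is essentially correct (the computation with $\psi=|B\varphi_-|\varphi_++B\varphi_+\varphi_-$ checks out), modulo the edge cases where the sup and inf are infinite, which you do not address. However, the heart of the matter---constructing the correction $r_t$ so that $t\varphi+r_t$ remains in $K$ while $g(u_0+t\varphi+r_t)=0$---is exactly what you label ``the essential content of \cite{dmitruk1980lyusternik}'' and do not carry out. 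In other words, the step that distinguishes this result from the classical Lagrange rule is the one you defer to the same source the paper cites. As a proof this is circular; as a summary of the method it is accurate.
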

\begin{proof}[Proof of Theorem \ref{thm:lincons}]
Denote by $I_i$ the connected components of $I:=\{x\in S^1:w(x)>1\}$. There are 
at most countably many of them, and we may write $I=\cup_{i=1}^\infty I_i$. We
are going to prove the statements (i), (ii) and (iii) for every $i=1,\dots,\infty$, and conclude in the end that there
is only a finite number of the $I_i$'s.\\
We apply  Theorem \ref{thm:convar} with $X=W^{2,2}(S^1)$, 
\[
\begin{split}
  f(\bar w)=&\int_{S^1} \left(\bar w''(t)+\bar w(t)\right)^2\d t\\
g(\bar w)=&\int_{S^1} \left(\bar w^2(t)-\bar w'^2(t)\right)\d t,
\end{split}
\]
taking $u_0=w$ as the minimizer from the statement of the present theorem,  and
\[
K=\{\varphi\in W^{2,2}(S^1):\varphi(t)\geq 0 \text{ for }t\in S^1\setminus I\}\,.
\] 
The conditions from Theorem \ref{thm:convar} are fulfilled and hence we obtain
the existence of some $\lambda\in \R$ such that
\begin{equation}
\int_{S^1} \left(\ddtt{4}{w}+(2+\lambda)w''+(1+\lambda)w \right)\varphi \d t\geq 0\,,\label{eq:29}
\end{equation}
for all $\varphi\in K$, where the integral is defined by integration by parts. In particular, this last inequality holds true for every
$\varphi\in W^{2,2}(S^1)$ with $\varphi\geq 0$. By the Riesz-Schwartz Theorem
(see e.g.\cite{MR0035918}), the linear map
\[
\begin{split}
  \bar \mu: W^{2,2}(S^1)\to&\R\\
  \varphi\mapsto&\int_{S^1}\left(\ddtt{4}{w}+(2+\lambda)w''+(1+\lambda)w\right)\varphi\d
  t
\end{split}
\] 
defines a non-negative Radon 
measure $\mu$ on $S^1$. In
particular, it follows that $\ddtt{3}{w}\in L^\infty(S^1)$ and hence 
$w\in C^{2,1}(S^1)$. This proves (i).\\
\\
For any $\varphi\in W^{2,2}(S^1)$ with $\supp\,\varphi\subset I$, we have
$\varphi\in K$, $-\varphi\in K$, and hence by \eqref{eq:29}, 
\begin{equation*}
\int_{S^1} \left(\ddtt{4}{w}+(2+\lambda)w''+(1+\lambda)w \right)\varphi \d t= 0\,.
\end{equation*}
This implies, in the sense of Radon measures,
\[
\ddtt{4}{w}+(2+\lambda)w''+(1+\lambda)w=0 \quad \text{ on }I\,.
\]
We claim that 
\begin{equation}
S^1\neq I\,.\label{eq:33}
\end{equation}
 Indeed, assume  the contrary  were the case. Then $w$ is a
local minimizer of the variational problem
\[
\left\{\begin{split}
  \int_{S^1}(w''+w)^2\d t\to& \text{min}\\
  \int_{S^1}(w^2-w'^2)\d t=&0\,.
\end{split}\right.
\]
By the standard Lagrange multiplier formalism, there exists $\tilde\lambda\in \R$ such
that
\begin{equation}
\ddtt{4}{w}+(2+\tilde\lambda)w''+(1+\tilde\lambda)w=0\quad\text{ on } S^1\,.\label{eq:32}
\end{equation}
Identifying $S^1$ with the interval $(-\pi,\pi)$, we must have
\begin{equation}
\begin{array}{rlrl}
w(-\pi)=&w(\pi)\qquad &w'(-\pi)=&w'(\pi)\\
w''(-\pi)=&w''(\pi)\qquad &\ddtt{3}{w}(-\pi)=&\ddtt{3}{w}(\pi)
\end{array}\label{eq:34}
\end{equation}
We claim that the dimension of the the solution space of the boundary value problem
defined by \eqref{eq:32} and \eqref{eq:34} is zero for $\tilde\lambda\neq -1$.
Indeed, rewriting \eqref{eq:32} as 
$x'=Ax$
with
\[
x=\left(\ddtt{3}{w}(t),w''(t),w'(t),w(t)\right)^T,\quad
A=\left(\begin{array}{cccc}0&2+\tilde\lambda&0&1+\tilde\lambda\\1&0&0&0\\0&1&0&0\\0&0&1&0\end{array}\right)
\]
and \eqref{eq:34} as $Ux=Mx(-\pi)+Nx(\pi)=0$
with $M=-N= \id_{4\times 4}$, we get that the ``boundary form'' $U$ applied to
the fundamental matrix $\exp(A\cdot)$ is given by
$U\exp(A\cdot)=\exp(-A\pi)-\exp(A\pi)$, which has full rank unless $\tilde\lambda=-1$.
Hence, for $\tilde \lambda\neq -1$, 
the claim that the dimension of the solution space is zero 
follows from a standard result in ODE theory (see e.g.~\cite[Chapter 11, Theorem
3.3]{MR0069338}). Hence, $w(t)=0$ is the unique solution to the boundary value problem above,
which is a contradiction to $w\geq 1$. \\
If $\tilde\lambda=-1$, then the solutions to \eqref{eq:32} are given
by  $w''(t)=-a \cos (t+t_0)$, where $a,t_0\in\R$ are integration constants. This
implies $w(t)=a\cos(t+ t_0)+b$, where $b\in \R$ is yet another integration
constant. From the constraint $\int_{-\pi}^\pi (w^2-w'^2)=0$, it follows $b=0$,
which again produces a contradiction to $w\geq 1$.
 This proves \eqref{eq:33}.\\
Now fix some $I_i$. After a translation, we may write $I_i=(-z_i,z_i)$ for some
$z_i\in(0,\pi]$ by \eqref{eq:33}. 
By the regularity of $w$, we have $w(\pm z_i)=1$, $w'(\pm z_i)=0$. 
Hence, $w|_{\overline{I_i}}$ has to be a solution of the boundary value problem
\begin{equation}
\left\{ \begin{array}{rl}\ddtt{4}{w}+(2+\lambda)w''+(1+\lambda)w=&0 \quad\text{on
   }(-z_i,z_i)\\w(-z_i)=w(z_i)=&1\\
w'(-z_i)=w'(z_i)=&0\,.
\end{array}\right.\label{eq:31}
\end{equation}
If $\lambda=0$,  there do not  exist  any solutions to \eqref{eq:32}. This would
imply $I=\emptyset$, which cannot be by the constraint $\int w^2-w'^2=0$. Hence,
we conclude 
\begin{equation}
\lambda\neq 0\,.\label{eq:35}
\end{equation}
If $\lambda\neq 0$, there exists a unique solution to \eqref{eq:31}.
We recall the notation $\alpha=\sqrt{|1+\lambda|}$. The solution of \eqref{eq:31} is given by
\begin{equation}
w(t)=
\frac{\sin(z_i)\cos(\alpha t)-\alpha\sin(\alpha z_i)\cos(t)}{\sin(z_i)\cos(\alpha z_i)-\alpha\sin(\alpha z_i)\cos(z_i)}\quad\text{
  if }\lambda\geq -1\,,
\label{eq:30}
\end{equation}
and
\begin{equation}
  \label{eq:40}
w(t)=  \frac{\sin(z_i)\cosh(\alpha t)+\alpha\sinh(\alpha z_i)\cos(t)}{\sin(z_i)\cosh(\alpha z_i)+\alpha\sinh(\alpha z_i)\cos(z_i)}\quad\text{
  if }\lambda < -1\,.
\end{equation}
Next, 
we prove (ii).
By the explicit formulas
\eqref{eq:30}, \eqref{eq:40}, we see that $w''$ is constant on the boundary of
every  $I_i$ (which of course just consists of up to two points). Let $k_i$
denote the value of $w''$ on $\partial I_i$. Then we set 
\[
v(t)=\begin{cases}k_i&\text{ if } t\in I_i\\w''(t)&\text{ if } t\in S^1\setminus
  I\,.\end{cases}
\]
By the regularity of $w$, $v$ is Lipschitz. Further, $v'=0$ on $I$ and on
$S^1\setminus  I=\{w=1\}$, we have $v'=\ddtt{3}{w}=0$ almost everywhere. Hence
$v'=0$ almost everywhere in $S^1$ and
$v$ is constant. This proves (ii).\\
Let us consider some 
$I_i$. Again, after translation, we may write $I_i=(-z_i,z_i)$. Note
that $g_\alpha(z_i)=w''(z_i)$ if $\lambda\geq -1$ and $\tilde
g_\alpha(z_i)=w''(z_i)$ if $\lambda<-1$. 
By (ii) and the continuity of $w''$, we have
$z_i\in g_\alpha^{-1}(k)$ if  $\lambda\geq -1$ and $z_i\in\tilde
g_\alpha^{-1}(k)$ if $\lambda<-1$. This holds true for any $i$. In combination
with \eqref{eq:30} and \eqref{eq:40}, this shows (iii).\\
\\
It remains to show that there is only a finite number of connected components of
$I$. First assume $\lambda\geq -1$. We may restate the relation $z_i\in g_\alpha^{-1}(k)$ as
\[
\frac12 \L^1(I_i)\in g_\alpha^{-1}(k) \quad \text{ for all } i\,,
\]
where $\L^1$ denotes the one-dimensional Lebesgue measure.
We claim that $\alpha\not\in\{0,1\}$. We have already seen $\lambda\neq 0$ in
\eqref{eq:35}, and
hence $\alpha\neq 1$. Further, if $\alpha=0$, then $w=1$
everywhere  by \eqref{eq:30}. This is a contradiction to $\int w^2-w'^2=0$. 
Hence, as we have noted in Remark \ref{rem:gak} above,
$g_\alpha^{-1}(k)$ is a finite set. In particular, there is a certain minimal length that any $I_i$ can have. This
implies that there are only finitely many connected components of $I$:
\[
I=\bigcup_{i=1}^m I_i\,.
\] 
If $\lambda<-1$, one argues in exactly the same way (using the finiteness of
$\tilde g_\alpha^{-1}(k)$) to conclude that there are
finitely many connected components of $I$.
\end{proof}
\bibliographystyle{plain}
\bibliography{regular}

\end{document}